\documentclass[reqno]{amsart}
\usepackage{amsmath, amssymb, amsfonts}
\newtheorem{theorem}{Theorem}

\newtheorem{corollary}{Corollary}
\newtheorem{lemma}{Lemma}

\theoremstyle{remark}
\newtheorem{remark}{Remark}
\theoremstyle{definition}
\newtheorem{define}{Definition}
\newtheorem{example}{Example}

\newcommand{\CC}{\mathbb C}

\begin{document}

\title[Infinitesimal CR automorphisms and stability groups of models]
{Infinitesimal CR automorphisms and stability groups of infinite type 
models in $\mathbb C^2$}

\author{Atsushi Hayashimoto and Ninh Van Thu}

\thanks{ The research of the second author was supported in part by 
an NRF grant 2011-0030044 (SRC-GAIA) of the Ministry of Education, 
The Republic of Korea.}
\address{Nagano National College of Technology, 716 Tokuma, Nagano 318-8550}
\email{atsushi@nagano-nct.ac.jp}
\address{(Permanent Address) Department of Mathematics, Vietnam
National University at Hanoi, 334 Nguyen Trai str., Hanoi, Vietnam}
\email{thunv@vnu.edu.vn}
\address{(Current Address) Center for Geometry and its Applications,
 Pohang University of Science and Technology,  Pohang 790-784, 
The Republic of Korea}
\email{thunv@postech.ac.kr}

\subjclass[2000]{Primary 32M05; Secondary 32H02, 32H50, 32T25.}
\keywords{Holomorphic vector field, automorphism group, 
real hypersurface, infinite type point.}
\begin{abstract}
The purpose of this paper is to give explicit descriptions for stability groups 
of real rigid hypersurfaces of infinite type  in $\mathbb C^2$.
The decompositions of infinitesimal CR automorphisms are also given. 
\end{abstract}
\maketitle

\section{Introduction}
Let $M$ be a $\mathcal{C}^\infty$-smooth real hypersurface 
in $\mathbb C^n$ and $p\in M$. 
We denote by $\mathrm{Aut}(M)$ the CR automorphism group of $M$, 
by $\mathrm{Aut}(M,p)$ the stability group of $M$, that is, those germs at $p$ 
of biholomorphisms mapping $M$ into itself and fixing $p$, 
and by $\mathfrak{aut}(M,p)$ the set of germs of holomorphic vector fields 
in $\mathbb C^n$ at $p$ whose real part is tangent to $M$. 
We call this set the Lie algebra of infinitesimal CR automorphisms. 
We also denote by 
$\mathfrak{aut}_0(M,p):=\{H\in \mathfrak{aut}(M,p) \colon H(p)=0\}$. 

For a real hypersurface in $\mathbb C^n$, 
the stability group and the Lie algebra of infinitesimal 
CR automorphisms are not easy to describe explicitly; 
besides, it is unknown in most cases. 
But, the study of $\mathrm{Aut}(M,p)$ and $\mathfrak{aut}(M,p)$ of 
special types of hypersurfaces is given in 
\cite{Chern, Ezhov, Ezhov1, Kolar4, Kolar3, Kolar2, Kolar1, Kolar5, Stanton1, Stanton2}. 
For instance, explicit forms of the stability groups of models 
(see detailed definition in \cite{Kolar4, Kolar5}) have been obtained 
in \cite{Ezhov1, Kolar4, Kolar3, Kolar5}. However, these results are 
known for Levi nondegenerate hypersurfaces 
or more generally for Levi degenerate 
hypersurfaces of finite type in the sense of D'Angelo (cf. \cite{D}).

In this article, we give explicit descriptions for the Lie algebra of infinitesimal 
CR automorphisms and for the stability group of an infinite type model  
$(M_P,0)$ in $\mathbb C^2$ which is defined by
$$
M_P:=\{(z_1,z_2)\in \mathbb C^2\colon \mathrm{Re}~z_1+P(z_2)=0\},
$$
where $P$ is a nonzero germ of a real-valued $\mathcal{C}^\infty$-smooth 
function at $0$ vanishing to infinite order at $z_2=0$. 

To state these results more precisely, we establish some notation. 
Denote by $\mathrm{G_2}(M_P,0)$ the set of all 
CR automorphisms of $M_P$ defined by
$$
(z_1,z_2)\mapsto (z_1,g_2(z_2)),
$$
for some holomorphic function $g_2$ with $g_2(0)=0$ and $|{g_2}'(0)|=1$ 
defined on a neighborhood of the origin in $\mathbb C$ satisfying that 
$P(g_2(z_2))\equiv P(z_2)$. 
Also denote by $\Delta_{\epsilon_0}$ a disc with center at the origin 
and radius $\epsilon_0$ and by $\Delta_{\epsilon_0}^*$ a punctured disc 
$\Delta_{\epsilon_0} \backslash \{0\}$. 

Let $P:\Delta_{\epsilon_0} \to \mathbb R$ be 
a $\mathcal{C}^\infty$-smooth function. 
Let us denote by 
$S_\infty(P)=\{z\in \Delta_{\epsilon_0}\colon \nu_z(P)=+\infty\}$, 
where $\nu_z(P)$ is the vanishing order of $P(z+\zeta)-P(z)$ at $\zeta=0$, 
and by $P_\infty(M_P)$ the set of all points of infinite type in $M_P$.
\begin{remark} 
It is not hard to see that 
$P_\infty(M_P)=\{(it-P(z_2),z_2) \colon t\in \mathbb R, z_2\in S_\infty(P)\}.$
\end{remark}
\begin{remark} In the case that $P\not \equiv 0$, $\mathrm{G_2}(M_P,0)$ 
contains only CR automorphisms of $M_P$ defined by
$$ 
(z_1,z_2)\mapsto (z_1,g_2(z_2)),
$$
where $g_2$ is a conformal map with $g_2(0)=0$ satisfying $P(g_2(z_2))\equiv P(z_2)$ and either
 ${g_2}'(0)= e^{2\pi i p/q}~(p,q\in \mathbb Z)$ and ${g_2}^q=\mathrm{id}$ or ${g_2}'(0)= e^{2\pi i \theta}$ for some $\theta \in\mathbb R\setminus \mathbb Q$ (see cf. Lemma \ref{lemma1} in \S~2 and Lemmas \ref{parabolic} and \ref{elliptic} in \S~3).
\end{remark}

The first aim of this paper is to prove the following two theorems, 
which give a decomposition of the infinitesimal CR automorphisms 
and an explicit description for stability groups of infinite type models.
In what follows, all functions, mappings, hypersurfaces, etc are understood 
to be germs at the reference points and we will not refer it 
if there is no confusions. 

\begin{theorem}\label{T1}
Let $(M_P,0)$ be a real $\mathcal{C}^\infty$-smooth 
hypersurface defined by the equation
$\rho(z) := \rho(z_1,z_2)=\mathrm{Re}~z_1+P(z_2)=0$, where 
$P$ is a $\mathcal{C}^\infty$-smooth function on a neighborhood of 
the origin in $\mathbb C$ satisfying the conditions: 
\begin{itemize}
\item[(i)] $P(z_2)\not \equiv 0$ on a neighborhood of $z_2= 0$, and
\item[(ii)] The connected component of $0$ in $S_\infty(P)$ is $\{0\}$. 
\end{itemize}
Then the following assertions hold: 
\begin{itemize}
\item[(a)] The Lie algebra $\mathfrak{g}=\mathfrak{aut}(M_P,0)$ 
admits the decomposition
$$
\mathfrak{g}=\mathfrak{g}_{-1}\oplus \mathfrak{aut}_0(M_P,0),
$$
where 
$\mathfrak{g}_{-1}=\{i\beta \partial_{z_1}\colon \beta\in \mathbb R\}$.
\item[(b)] If $\mathfrak{aut}_0(M_P,0)$ is trivial, then
$$
\mathrm{Aut}(M_P,0)=\mathrm{G_2}(M_P,0).
$$
\end{itemize}
\end{theorem}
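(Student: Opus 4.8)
The plan is as follows. For part (a), a direct computation gives $\mathrm{Re}\,\big((i\beta\,\partial_{z_1})\rho\big)=\mathrm{Re}(i\beta/2)=0$, so $\mathfrak g_{-1}\subset\mathfrak{aut}(M_P,0)$, while $\mathfrak g_{-1}\cap\mathfrak{aut}_0(M_P,0)=\{0\}$ since $i\beta\,\partial_{z_1}$ vanishes at $0$ only for $\beta=0$. It therefore suffices to show that every $H=h_1\,\partial_{z_1}+h_2\,\partial_{z_2}\in\mathfrak{aut}(M_P,0)$ has $\mathrm{Re}\,h_1(0)=0$ and $h_2(0)=0$: writing $h_1(0)=i\beta$ with $\beta\in\mathbb R$, the field $H-i\beta\,\partial_{z_1}$ then lies in $\mathfrak{aut}_0(M_P,0)$, which gives the direct‑sum decomposition. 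The tangency condition reads $\mathrm{Re}\big(\tfrac12 h_1+h_2\,\partial_{z_2}P\big)=0$ on $M_P$; evaluating it at $0\in M_P$ and using $\partial_{z_2}P(0)=0$ (as $P$ is flat at $0$) yields $\mathrm{Re}\,h_1(0)=0$. For $h_2(0)=0$ I would argue by contradiction: if $h_2(0)\ne 0$, let $\Phi_t$ be the local flow of the real vector field $2\,\mathrm{Re}\,H$; its trajectories solve the holomorphic ODE $\dot z=(h_1(z),h_2(z))$, so each $\Phi_t$ is biholomorphic, and since $\mathrm{Re}\,H$ is tangent to $M_P$, $\Phi_t$ carries a neighbourhood of $0$ in $M_P$ biholomorphically onto a neighbourhood of $\Phi_t(0)$ in $M_P$ and hence preserves the D'Angelo type pointwise. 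As $0$ is of infinite type, $\Phi_t(0)$ is of infinite type, so by Remark~1 its second coordinate $z_2(\Phi_t(0))$ lies in $S_\infty(P)$; but $z_2(\Phi_t(0))=h_2(0)\,t+O(t^2)$ describes a nonconstant smooth curve through $0$, so $\{z_2(\Phi_t(0)):|t|<\epsilon\}$ is a connected subset of $S_\infty(P)$ strictly larger than $\{0\}$, contradicting (ii). Hence $h_2(0)=0$.

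For part (b), the inclusion $\mathrm{G_2}(M_P,0)\subset\mathrm{Aut}(M_P,0)$ is immediate, since $(z_1,z_2)\mapsto(z_1,g_2(z_2))$ fixes $0$ and leaves $\rho$ unchanged when $P\circ g_2\equiv P$. Conversely, take $\varphi=(\varphi_1,\varphi_2)\in\mathrm{Aut}(M_P,0)$. By part (a) and the hypothesis $\mathfrak{aut}_0(M_P,0)=\{0\}$ we have $\mathfrak{aut}(M_P,0)=\mathfrak g_{-1}=\mathbb R\cdot i\,\partial_{z_1}$, a one‑dimensional Lie algebra which $\varphi$ carries isomorphically onto itself under conjugation of vector fields; hence $\varphi_*(i\,\partial_{z_1})=i\lambda\,\partial_{z_1}$ for some $\lambda\in\mathbb R\setminus\{0\}$. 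Unwinding the push‑forward gives $\partial_{z_1}\varphi_1\equiv\lambda$ and $\partial_{z_1}\varphi_2\equiv 0$, so $\varphi_2=g_2(z_2)$ with $g_2(0)=0$, $g_2'(0)\ne 0$, and $\varphi_1=\lambda z_1+h(z_2)$ with $h(0)=0$. Substituting into $\rho\circ\varphi\equiv 0$ on $M_P$ (i.e. setting $\mathrm{Re}\,z_1=-P(z_2)$) yields $\mathrm{Re}\,h(z_2)=\lambda P(z_2)-P(g_2(z_2))$; the right‑hand side vanishes to infinite order at $0$, hence so does the harmonic function $\mathrm{Re}\,h$, forcing $\mathrm{Re}\,h\equiv 0$ and, since $h(0)=0$, $h\equiv 0$. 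Thus $\varphi=(\lambda z_1,g_2(z_2))$ with $P\circ g_2\equiv\lambda P$.

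It remains to prove $\lambda=1$ (whereupon $|g_2'(0)|=1$ follows, since $|g_2'(0)|<1$ would give $P=P\circ g_2^{\circ n}\to 0$ as $n\to\infty$, contradicting (i)), and this is the heart of the matter and the main obstacle. First one shows $|g_2'(0)|=1$: otherwise, replacing $\varphi$ by $\varphi^{-1}$ if necessary, assume $|g_2'(0)|<1$; by Koenigs' theorem $g_2=\psi^{-1}\circ(g_2'(0)\,\cdot\,)\circ\psi$, so $\tilde P:=P\circ\psi^{-1}$ is flat at $0$, not identically $0$ near $0$ by (i), and satisfies $\tilde P(g_2'(0)w)=\lambda\tilde P(w)$; hence $M(|g_2'(0)|^{\,n}r)=|\lambda|^{n}M(r)$ for $M(r):=\max_{|w|=r}|\tilde P(w)|>0$, and comparing with $M(r)=o(r^{N})$ for every $N$ forces $|\lambda|<|g_2'(0)|^{\,N}$ for all $N$, i.e. $\lambda=0$, absurd. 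Knowing $|g_2'(0)|=1$, I would analyse the dynamics of the conformal germ $g_2$ case by case: when $g_2$ is of finite order or elliptic‑linearizable there are arbitrarily large $n$ with $g_2^{\circ n}$ close to the identity, which forces $\lambda^{n}\to 1$ and hence $\lambda=1$ (in the irrational‑rotation case one may take such $n$ odd, ruling out $\lambda=-1$); when $g_2$ is parabolic one picks a point with $P\ne 0$ in an attracting petal — possible by (ii) — and uses $P\circ g_2^{\circ n}=\lambda^{n}P$ together with $g_2^{\circ n}\to 0$ along the petal to exclude $|\lambda|\ne 1$, excluding $\lambda=-1$ with $g_2$ parabolic via Lemma~\ref{parabolic}; the remaining case, $\lambda=-1$ with $g_2$ of finite (necessarily even) order, is then excluded by a finer analysis showing it would produce a nontrivial element of $\mathfrak{aut}_0(M_P,0)$ (where Lemma~\ref{lemma1} enters), contrary to hypothesis. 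This gives $\lambda=1$, so $\varphi=(z_1,g_2(z_2))$ with $P\circ g_2\equiv P$ and $|g_2'(0)|=1$, i.e. $\varphi\in\mathrm{G_2}(M_P,0)$. The difficulty is concentrated entirely in this last step — pinning down $\lambda=1$, and in particular excluding $\lambda=-1$ — which requires the classification of conformal germs with unimodular multiplier and a careful use of (ii); everything else is bookkeeping of power series together with the flatness of $P$.
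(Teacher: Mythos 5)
Your overall strategy coincides with the paper's: part (a) is proved there exactly as you propose, by noting that the flow of $\mathrm{Re}\,H$ keeps the origin inside the infinite-type locus $P_\infty(M_P)$, whose connected component through $0$ is $\{(is,0)\colon s\in\mathbb R\}$ by hypothesis (ii); and for part (b) the paper uses the same conjugation device (it forms $F_t:=f\circ T_{-t}\circ f^{-1}$, which must again be a one-parameter group in $\mathfrak g_{-1}$), arrives at $f=(\lambda z_1+h(z_2),g_2(z_2))$, kills $h$ by flatness of $P$, and reduces everything to the functional equation $P\circ g_2=\lambda P$. Your Koenigs/maximum-modulus argument for $|g_2'(0)|=1$ is a legitimate substitute for the paper's Lemma~\ref{lemma2}. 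Up to that point the proposal is sound and essentially the paper's proof.

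The gap sits exactly where you say the difficulty is concentrated: you do not actually establish $\lambda=1$. You exclude $|\lambda|\neq 1$ and handle the odd-order, linearizable-elliptic and genuinely parabolic cases, but the case ``$\lambda=-1$ with $g_2$ of finite even order'' is dismissed with an unexplained ``finer analysis \dots would produce a nontrivial element of $\mathfrak{aut}_0(M_P,0)$'', for which you offer no mechanism (and the citation of Lemma~\ref{parabolic}, a construction lemma, does not exclude anything). This cannot be repaired by the dynamics of $g_2$ alone: the equation $P\circ g_2=-P$ with $g_2$ an involution is genuinely realizable by a flat $P$ satisfying (i) and (ii) --- take $P(z_2)=\mathrm{Re}(z_2)\,e^{-1/|z_2|^2}$ and $g_2(z_2)=-z_2$, which produces the germ $(z_1,z_2)\mapsto(-z_1,-z_2)$ preserving $M_P$, fixing $0$ and not lying in $\mathrm{G_2}(M_P,0)$ --- so whatever closes this case must use the hypothesis $\mathfrak{aut}_0(M_P,0)=0$ or further structure of $P$ in an essential way that you have not supplied. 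You also omit the non-linearizable elliptic (Cremer) case from your enumeration. For what it is worth, the paper routes this step through its Lemma~\ref{lemma1}, whose Case~2 infers ``$\{\delta^n\}$ convergent, hence $\delta=1$'' from relative compactness of a periodic orbit; that argument only yields $\delta^q=1$, i.e.\ $\delta=\pm1$, so your unease about $\lambda=-1$ is well placed --- but identifying the obstacle is not the same as overcoming it, and as written your proof of (b) is incomplete at its decisive point.
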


\begin{remark}
The condition $\mathrm{(ii)}$ simply tells us that $M_P$ is of infinite type. 
Moreover,  the connected component of $0$ in $P_\infty(M_P)$ is 
the set $\{(it,0)\colon t\in \mathbb R\}$, which plays 
a key role in the proof of this theorem. 
\end{remark}

In the case that the connected component of $0$ in 
$S_\infty(P)$ is not $\{0\}$ 
such as $M_P$ is tubular, we have the following theorem.
\begin{theorem}\label{T11}
Let $\tilde P$ be a $\mathcal{C}^\infty$-smooth function defined 
on a neighborhood of $0$ in $\mathbb C$ satisfying:
\begin{itemize}
\item[(i)] $\tilde P(x)\not \equiv 0$ on a neighborhood of 
$x= 0$ in $\mathbb R$, and
\item[(ii)] The connected component of $0$ in $S_\infty(\tilde P)$ is $\{0\}$.
\end{itemize}
Denote by $P$ a function defined by setting 
$P(z_2):=\tilde P(\mathrm{Re}~z_2)$.
Then the following assertions hold:
\begin{itemize}
\item[(a)]$\mathfrak{aut}_0(M_P,0)=0$ and 
the Lie algebra $\mathfrak{g}=\mathfrak{aut}(M_P,0)$ 
admits the decomposition
$$
\mathfrak{g}=\mathfrak{g}_{-1}\oplus\mathfrak{g}_{0},
$$
where $\mathfrak{g}_{-1}=\{i\beta \partial_{z_1}\colon \beta\in \mathbb R\}$ 
and $\mathfrak{g}_{0}=\{i\beta \partial_{z_2}\colon \beta\in \mathbb R\}$.
\item[(b)] $\mathrm{Aut}(M_P,0)=\{id\}$.
\item[(c)] If $S_\infty(\tilde P)=\{0\}$, 
then $\mathrm{Aut}(M_P)=\mathrm{T^1}(M_P)\oplus\mathrm{T^2}(M_P)
=\{(z_1,z_2)\mapsto (z_1+it,z_2+is)\colon t,s\in\mathbb R\}$, 
where $\mathrm{T^1}(M_P)=\{(z_1,z_2)\mapsto (z_1+it,z_2)\colon t\in\mathbb R\}$ 
and $\mathrm{T^2}(M_P)=\{(z_1,z_2)\mapsto (z_1,z_2+it)\colon t\in\mathbb R\}$. 
\end{itemize}
\end{theorem}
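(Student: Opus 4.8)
The strategy is to prove (a) first, deduce (b) from it, and obtain (c) from (b).

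\emph{Part (a).} The inclusion $\mathfrak{g}_{-1}\oplus\mathfrak{g}_0\subseteq\mathfrak{aut}(M_P,0)$ is immediate: since $\rho=\mathrm{Re}\,z_1+\tilde P(\mathrm{Re}\,z_2)$ involves neither $\mathrm{Im}\,z_1$ nor $\mathrm{Im}\,z_2$, the real parts $\partial_{\mathrm{Im}\,z_1}$ and $\partial_{\mathrm{Im}\,z_2}$ of $i\partial_{z_1}$ and $i\partial_{z_2}$ annihilate $\rho$. For the reverse inclusion, take $H=h_1\partial_{z_1}+h_2\partial_{z_2}\in\mathfrak{aut}(M_P,0)$. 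Parametrizing $M_P$ by $(t,x_2,y_2)\in\mathbb R^3$ via $z_1=it-\tilde P(x_2)$, $z_2=x_2+iy_2$, the tangency condition $\mathrm{Re}(H\rho)|_{M_P}=0$ becomes
$$\mathrm{Re}\Big[h_1\big(it-\tilde P(x_2),\,x_2+iy_2\big)+\tilde P'(x_2)\,h_2\big(it-\tilde P(x_2),\,x_2+iy_2\big)\Big]=0$$
for all $(t,x_2,y_2)$ near $0$. The first step is to apply $\partial_{x_2}^{k}$ to this identity and set $x_2=0$: as every derivative of $\tilde P$ vanishes at $0$, every resulting term carrying a factor $\tilde P^{(j)}(x_2)$ with $j\ge 1$ drops out, leaving $\mathrm{Re}\big[\partial_{z_2}^{k}h_1(it,iy_2)\big]=0$ for all $k\ge 0$ and all $t,y_2\in\mathbb R$. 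Writing $h_1=\sum_{k,l}a_{kl}z_1^{k}z_2^{l}$ and comparing the coefficients of $t^{k}y_2^{\,l-n}$ in $\partial_{z_2}^{n}h_1(it,iy_2)$ for $n=l$ and $n=l-1$ gives $\mathrm{Re}(a_{kl}i^{k})=\mathrm{Re}(a_{kl}i^{k+1})=0$, hence $a_{kl}=0$ whenever $l\ge 1$; so $h_1=h_1(z_1)$, with $\mathrm{Re}[h_1(it)]\equiv 0$.

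Next, because $\tilde P\not\equiv 0$ near $0$ (hypothesis (i)) and $\tilde P(0)=0$, the open set $\Omega:=\{x_2:\tilde P'(x_2)\neq 0\}$ is nonempty, with $0$ in its closure. Over $\Omega$ the tangency identity reads $\tilde P'(x_2)\,\mathrm{Re}[h_2]=-\mathrm{Re}[h_1(z_1)]$, whose right-hand side is independent of $\mathrm{Im}\,z_2$; differentiating twice in $\mathrm{Im}\,z_2$ and using that $\mathrm{Re}\,h_2(z_1,\cdot)$ is harmonic forces $\mathrm{Re}[\partial_{z_2}^{2}h_2]$ to vanish over $\Omega$. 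The plan is then to propagate this vanishing and pass to the limit $x_2\to 0$ using hypothesis (ii) — the connected component of $0$ in $S_\infty(\tilde P)$ being $\{0\}$ forces finite-type points of $\tilde P$ to accumulate at $0$ — to conclude first that $\partial_{z_2}^{2}h_2$ is a purely imaginary constant, so $h_2(z_1,z_2)=\tfrac{ic_0}{2}z_2^{2}+\beta(z_1)z_2+\alpha(z_1)$ with $c_0\in\mathbb R$, and then, feeding this back into the tangency identity, that $\alpha,\beta,c_0$ and the nonconstant part of $h_1$ all vanish, i.e. $h_1\equiv i\beta_0$, $h_2\equiv i\gamma_0$ with $\beta_0,\gamma_0\in\mathbb R$. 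Finally $\mathfrak{aut}_0(M_P,0)=0$ follows at once, since $i\beta_0\partial_{z_1}+i\gamma_0\partial_{z_2}$ vanishes at $0$ only when $\beta_0=\gamma_0=0$. I expect this last propagation/limit step to be the main obstacle: over the infinite-type locus $\{\mathrm{Re}\,z_2=0\}$ the coefficient $\tilde P'$ and all of its derivatives vanish, so that region yields no constraint on $h_2$, and one must carefully transport the finite-type information from $\Omega$ and control its degeneration as $x_2\to 0$; this is exactly where conditions (i) and (ii) are used essentially.

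\emph{Part (b).} By (a), $\mathfrak{aut}_0(M_P,0)=0$. Arguing parallel to the proof of Theorem~\ref{T1}(b): an $F=(f_1,f_2)\in\mathrm{Aut}(M_P,0)$ preserves $M_P$, fixes $0$, and preserves the infinite-type locus; the rigidity used in (a) forces $f_1(z_1,z_2)=z_1$ and $f_2=f_2(z_2)$ with $\tilde P(\mathrm{Re}\,f_2(z_2))\equiv\tilde P(\mathrm{Re}\,z_2)$, i.e. $F\in\mathrm{G_2}(M_P,0)$. By Remark~2, $f_2$ is conformal with $f_2'(0)=e^{2\pi i\theta}$; evaluating $\tilde P(\mathrm{Re}\,f_2(z_2))\equiv\tilde P(\mathrm{Re}\,z_2)$ at purely imaginary $z_2=iy_2$ gives $\tilde P\big(-\sin(2\pi\theta)\,y_2+O(y_2^{2})\big)\equiv 0$, so if $\sin(2\pi\theta)\neq 0$ then $\tilde P$ would vanish on a neighbourhood of $0$, contradicting (i); hence $f_2'(0)=\pm 1$. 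A further elementary analysis of the functional equation, again using (ii), then forces $f_2=\mathrm{id}$, so $\mathrm{Aut}(M_P,0)=\{\mathrm{id}\}$.

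\emph{Part (c).} Assume $S_\infty(\tilde P)=\{0\}$. The imaginary translations clearly preserve $M_P$, so $\mathrm{T^1}(M_P)\oplus\mathrm{T^2}(M_P)\subseteq\mathrm{Aut}(M_P)$. Conversely, let $F\in\mathrm{Aut}(M_P)$. Since $P(z_2+\zeta)-P(z_2)=\tilde P(\mathrm{Re}\,z_2+\mathrm{Re}\,\zeta)-\tilde P(\mathrm{Re}\,z_2)$ depends only on $\mathrm{Re}\,\zeta$, one has $S_\infty(P)=\{z_2:\mathrm{Re}\,z_2\in S_\infty(\tilde P)\}=i\mathbb R$, so by Remark~1 the infinite-type set is $P_\infty(M_P)=\{(it,iy_2):t,y_2\in\mathbb R\}$. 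As $F$ preserves type, $F(0)\in P_\infty(M_P)\subseteq i\mathbb R^2$, hence the translation $T(z):=z-F(0)$ lies in $\mathrm{T^1}(M_P)\oplus\mathrm{T^2}(M_P)$ and $T\circ F\in\mathrm{Aut}(M_P,0)$. By (b), $T\circ F$ is the identity on a neighbourhood of $0$ in $M_P$; since a biholomorphism of a neighbourhood of $0$ in $\mathbb C^2$ restricting to the identity on an open piece of the real hypersurface $M_P$ must be the identity, we get $T\circ F=\mathrm{id}$, whence $F=T^{-1}\in\mathrm{T^1}(M_P)\oplus\mathrm{T^2}(M_P)$. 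This yields the asserted equality.
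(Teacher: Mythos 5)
Your part (c) is correct and coincides with the paper's argument. The problems are in (a) and (b). In (a), your reduction of $h_1$ to a function of $z_1$ alone with $\mathrm{Re}\,h_1(it)\equiv 0$ is sound, but the decisive step --- showing that $h_2$ collapses to a purely imaginary constant, equivalently that $\mathfrak{aut}_0(M_P,0)=0$ --- is only announced as a ``plan,'' and you yourself flag the propagation/limit step as the main obstacle. That step \emph{is} the theorem; without it nothing is proved. The paper isolates it as Theorem \ref{T111}: after substituting $t=\alpha P(z_2)$ into the tangency identity and extracting lowest-order Taylor terms, the key mechanism is that $P_{z_2}(z_2)=\tfrac12\tilde P'(x)$ depends only on $x=\mathrm{Re}\,z_2$, so the identity would force the quotient $\tilde P'(x)/\big(P(x)\big)^{j_0-m_0}$ to equal an expression that genuinely depends on $y=\mathrm{Im}\,z_2$, a contradiction; in the remaining case one gets $\mathrm{Re}\big[(i\alpha-1)^{m_0}b_{m_0n_0}(z_2^{n_0}+o(|z_2|^{n_0}))\big]\equiv 0$, which pins down $h_2=i\gamma_0$. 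Nothing in your sketch substitutes for this. Note also that once $\mathfrak{aut}_0(M_P,0)=0$ is in hand, the grading in (a) is immediate from the fact that the flow of $H$ moves $0$ inside $P_\infty(M_P)$, whose component of $0$ is $\{(it_1,it_2)\}$; the elaborate coefficient analysis is not needed for that part.

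In (b), the sentence ``the rigidity used in (a) forces $f_1(z_1,z_2)=z_1$ and $f_2=f_2(z_2)$'' is not a deduction from anything you established. The paper obtains the special form of $f$ by conjugating \emph{both} translation one-parameter groups $T^1_t,T^2_t$ by $f$ and using $\mathfrak{aut}(M_P,0)=\mathfrak g_{-1}\oplus\mathfrak g_{0}$ to conclude that $f$ is complex-linear; the invariance of $M_P$ then yields the functional equation $P(-\delta^2_2t+\delta^1_2P(t))=-\delta^1_1P(t)$, which must be dismantled piece by piece: the mean value theorem together with the proof of Lemma \ref{lemma2} fixes the dilation factors, and Lemma \ref{lemma-3} (a genuinely nontrivial uniform-continuity argument) is required to kill the shear coefficient $\delta^1_2$. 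All of this is absent from your proposal, and ``a further elementary analysis \dots forces $f_2=\mathrm{id}$'' hides exactly the hard part. Moreover your own computation leaves $f_2'(0)=-1$ open, and this case cannot be waved away: for an even $\tilde P$ (e.g.\ $\tilde P(x)=e^{-1/x^2}$) the map $(z_1,z_2)\mapsto(z_1,-z_2)$ preserves $M_P$ and fixes the origin, so it is a nontrivial element of $\mathrm{Aut}(M_P,0)$; any complete proof of (b) has to confront this reflection explicitly rather than assert that the functional equation forces the identity.
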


These theorems shows that the special conditions of 
defining functions determine 
the forms of holomorphic vector fields. Conversely, the second aim of this paper is to show that holomorphic vector fields determine the form of defining functions. This is, in some sense, the converse of Example~\ref{ex2} in \S~6 holds generally. Namely, we prove the following.
\begin{theorem}\label{T3}
Let  $(M_P,0)$ be a $\mathcal{C}^\infty$-smooth hypersurface 
defined by the equation
$\rho(z) := \rho(z_1,z_2)=\mathrm{Re}~z_1+P(z_2)=0$,
satisfying the conditions:
\begin{itemize}
\item[(i)] The connected component of $z_2=0$ in the zero set of $P$ is $\{0\}$;
\item[(ii)] $P$ vanishes to infinite order at $z_2=0$. 
\end{itemize}
Then any holomorphic vector field vanishing at the origin tangent to $(M_P,0)$ 
is either identically zero, or, after a change of variable in $z_2$, 
of the form $i \beta z_2\partial_{z_2}$ for some non-zero real number 
$\beta$, in which case $M_P$ is rotationally symmetric, i.e. 
$P(z_2)=P(|z_2|)$.
\end{theorem}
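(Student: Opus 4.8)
The plan is to start from the defining equation $\rho(z_1,z_2)=\mathrm{Re}\,z_1+P(z_2)=0$ and write a candidate holomorphic vector field as $H=f(z_1,z_2)\partial_{z_1}+g(z_1,z_2)\partial_{z_2}$ with $f(0)=g(0)=0$, then impose the tangency condition $(\mathrm{Re}\,H)\rho=0$ on $M_P$. Writing $z_1=it-P(z_2)$ to parametrize $M_P$, tangency becomes the identity
$$
\mathrm{Re}\Big(f\big(it-P(z_2),z_2\big)+2P_{z_2}(z_2)\,g\big(it-P(z_2),z_2\big)\Big)=0
$$
for all $t\in\mathbb R$ and all small $z_2$. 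The first step is to exploit condition (ii), that $P$ vanishes to infinite order at $z_2=0$: expanding $f$ and $g$ in Taylor series in $z_1$ about the line $z_1=it$ and matching orders of vanishing in $z_2$ should force $f$ to be independent of $z_2$ and essentially linear, i.e. $f(z_1,z_2)=i\beta z_1+i\gamma$ type terms are killed by $f(0)=0$ and the reality constraint along $t\mapsto it$, so one expects $f\equiv 0$ after using part (a) of Theorem~\ref{T1} (which already gives the grading $\mathfrak{g}=\mathfrak{g}_{-1}\oplus\mathfrak{aut}_0$, and here we are in $\mathfrak{aut}_0$, so the $\partial_{z_1}$-part vanishes at $0$). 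More precisely, I would first show $f$ depends only on $z_1$, then that $f$ is linear in $z_1$ with purely imaginary coefficient, and then that the coefficient must vanish because $P\not\equiv 0$; this uses condition (i) to rule out the degenerate case where $P$ is flat on a whole curve through $0$.

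Once $f\equiv 0$, the tangency identity collapses to $\mathrm{Re}\big(P_{z_2}(z_2)\,g(it-P(z_2),z_2)\big)=0$ on $M_P$. The next step is to show $g$ does not depend on $z_1$: differentiate the identity in $t$, use that $P_{z_2}$ is not identically zero near $0$ (again condition (i)), and conclude $\partial_{z_1}g=0$ on an open set, hence $g=g(z_2)$ is holomorphic in one variable with $g(0)=0$. Then the condition reduces to the single real-variable functional equation
$$
\mathrm{Re}\big(P_{z_2}(z_2)\,g(z_2)\big)\equiv 0,
$$
i.e. $2\,\mathrm{Re}\big(g(z_2)\,\partial_{z_2}\big)P=0$, meaning the real flow of the holomorphic vector field $X=g(z_2)\partial_{z_2}$ on $\mathbb C$ preserves $P$. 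Since $g(0)=0$, write $g(z_2)=\lambda z_2+O(z_2^2)$. Because $P$ vanishes to infinite order, $P$ cannot be preserved by a flow with a hyperbolic (contracting or expanding) linear part unless $P\equiv 0$ near $0$; so $\mathrm{Re}\,\lambda=0$, and after the standard linearization of a one-dimensional holomorphic vector field with purely imaginary (hence in particular non-resonant when $\lambda\neq 0$) linear part, there is a holomorphic coordinate change in $z_2$ bringing $X$ to $i\beta z_2\partial_{z_2}$ with $\beta=\mathrm{Im}\,\lambda\in\mathbb R$. If $\beta=0$ then $g\equiv 0$ and $H\equiv 0$; if $\beta\neq 0$, the invariance $\mathrm{Re}(i\beta z_2\,\partial_{z_2})P=0$ says exactly that $P$ is constant along the circles $|z_2|=\text{const}$, i.e. $P(z_2)=P(|z_2|)$, so $M_P$ is rotationally symmetric.

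The main obstacle I expect is the linearization step and, relatedly, justifying that a purely imaginary linear part with irrational rotation number still yields a genuine (holomorphic, or at least formal-then-convergent) change of variable in $z_2$ compatible with the smooth function $P$ — the classical Poincaré–Koenigs linearization handles the linear-part eigenvalue $\lambda\notin i\mathbb Q$ case directly (no small divisors because we linearize a vector field with one-dimensional phase space), while the rational rotation case ${g_2}'(0)=e^{2\pi i p/q}$ needs the finite-order argument already flagged in Remark~2 and Lemmas~\ref{parabolic}, \ref{elliptic}; I would quote those lemmas rather than reprove them. The second delicate point is the infinite-order vanishing argument ruling out $\mathrm{Re}\,\lambda\neq 0$: one must argue that along an orbit $z_2(s)=e^{\lambda s}z_2(0)$ the value $P(z_2(s))$ is constant while $|z_2(s)|\to 0$, so $P$ would be locally constant $=P(0)=0$ near $0$, contradicting (i) (here one uses that the zero set of $P$ having $\{0\}$ as its connected component of $0$ forbids $P$ from vanishing on a punctured neighborhood). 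Everything else is bookkeeping with Taylor expansions and the reality condition.
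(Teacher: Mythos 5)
There is a genuine gap, and it sits exactly where the real work of the paper's proof lies. Your plan is to kill the $\partial_{z_1}$-component $f$ and the $z_1$-dependence of $g$ by Taylor matching and by differentiating the tangency identity in $t$, calling the rest ``bookkeeping.'' This cannot work as stated. First, knowing that $\mathrm{Re}\,G(w)=0$ for $w$ on a vertical line does not force a holomorphic $G$ to be constant in $w$ (take $G(w)=w+P(z_2)$ on the line $\mathrm{Re}\,w=-P(z_2)$), so ``differentiate in $t$ and conclude $\partial_{z_1}g=0$'' is not a valid step. Second, and more seriously, the formal analysis does \emph{not} force $f$ to be linear in $z_1$ and then zero: the tangency identity admits the nontrivial candidate $h_2=iz_2e^{i\beta_1 z_1}$, $h_1=a_{10}(e^{i\beta_1 z_1}-1)/(i\beta_1)$, which passes every order-of-vanishing test and is tangent to $M_P$ precisely when $P$ satisfies the first-order equation $2\,\mathrm{Re}\bigl[iz_2P_{z_2}(z_2)e^{-i\beta_1 P(z_2)}\bigr]=a_{10}\sin(\beta_1P(z_2))/\beta_1$. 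The heart of the paper's proof is excluding this: one integrates along the flow of $iz_2e^{-i\beta_1P(z_2)}\partial_{z_2}$, solves the induced ODE $u'=a_{10}\sin(\beta_1 u)/\beta_1$ explicitly (the $\arctan$ formula), and derives a contradiction with the continuity of $P$ at a limit point of the orbit; a separate lemma, using hypothesis (i), guarantees the flow stays inside $\{P\neq 0\}$ so that these quantities make sense. Your proposal contains no substitute for this dynamical argument, and your appeal to Theorem~\ref{T1}(a) does not help: that theorem assumes a different condition (on $S_\infty(P)$, not on the zero set of $P$), and in any case its statement only says that an infinitesimal automorphism differs from an element of $\mathfrak{g}_{-1}$ by something vanishing at $0$; it says nothing about the $\partial_{z_1}$-component of an element of $\mathfrak{aut}_0(M_P,0)$ being identically zero.

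Your endgame is sound and agrees with the paper: once one truly knows $h_1\equiv 0$ and $h_2=h_2(z_2)$ with $h_2(0)=0$, the hyperbolic linear part is excluded because $P$ would be forced to vanish identically near $0$ (this is the idea of Lemma~\ref{lemma2}), and the purely imaginary case linearizes to $i\beta z_2\partial_{z_2}$ after a change of variable, whence $\mathrm{Re}\bigl[i z_2 P_{z_2}(z_2)\bigr]\equiv 0$ gives rotational symmetry. But even within the case $h_1\equiv 0$ the paper must again rule out $z_1$-dependence of $h_2$ (the same exponential candidate reappears) by a second flow computation, so the two reductions you treat as routine are precisely the two places where the proof is hard.
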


The organization of this article is the following. In \S~2, 
we prove three lemmas which we use in the proof of theorems.    
In \S~3, we give a description of stability groups and proofs of 
Theorems \ref{T1} and \ref{T11} are given in \S~4. 
In \S~5, we prove Theorem~\ref{T3} and lemmas needed to prove it. 
In \S~6, we introduce some examples. 
Finally, two theorems are presented in Appendix A.

\section{Preliminaries}
In this section, we shall recall some definitions and introduce three lemmas 
which are used to prove Theorems \ref{T1} and \ref{T11}.
\begin{define} 
Let $g_1,g_2$ be two conformal maps with $g_1(0)=g_2(0)=0$. 
We say that $g_1$ and $g_2$ are holomorphically locally conjugated 
if there exists a biholomorphism $\varphi$ with $\varphi(0)=0$ such that
$$
g_1\equiv\varphi^{-1}\circ g_2\circ \varphi.
$$ 
\end{define}
\begin{define}
Let $g$ be a conformal map with $g(0)=0$. Then
\begin{itemize}
\item[(i)] if $g'(0)=1$, we say that $g$ is tangent to the identity;
 \item[(ii)] if $g'(0)=e^{2\pi i p/q},~p,q\in \mathbb Z$, we say that $g$ is parabolic;
 \item[(iii)] if $g'(0)=e^{2\pi i \theta}$ for some 
            $\theta \in \mathbb R\setminus\mathbb Q$, 
we say that $g$ is elliptic.
\end{itemize}
\end{define}
The following lemma is a slight generalization of \cite[Lemma $2$]{Ninh1}. 

\begin{lemma}\label{lemma2} 
Let $P$ be a $\mathcal{C}^\infty$-smooth function 
on $\Delta_{\epsilon_0}~(\epsilon_0>0)$ satisfying  $\nu_0(P)=+\infty$ and
 $P(z)\not \equiv 0$. 
Suppose that there exists a conformal map $g$ on $\Delta_{\epsilon_0}$ 
with $g(0)=0$ such that
$$
 P(g(z))=\big(\beta+o(1)\big)P(z), \; z\in \Delta_{\epsilon_0}
 $$ 
for some $\beta\in \mathbb R^*$. Then $|g'(0)|=1$.
\end{lemma}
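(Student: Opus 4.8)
The plan is to argue by contradiction. Suppose $|g'(0)| \neq 1$; since we may replace $g$ by its inverse (which satisfies $P(g^{-1}(w)) = (\beta^{-1} + o(1))P(w)$ by substituting $z = g^{-1}(w)$ and using continuity), we may assume $0 < |g'(0)| < 1$. Then $g$ is an attracting fixed point: after a holomorphic change of coordinate (Koenigs linearization) we could even take $g(z) = \lambda z$ with $\lambda = g'(0)$, but in fact we do not need linearization — it suffices that the iterates $g^n$ converge uniformly to $0$ on a fixed disc $\Delta_r$, with $|(g^n)'(0)| = |\lambda|^n \to 0$, and more precisely that for $z \in \Delta_r$ one has $|g^n(z)| \le C|\lambda|^n|z|$ for suitable constants (this follows from the standard estimate near an attracting fixed point).

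The key step is to iterate the functional relation. From $P(g(z)) = (\beta + o(1))P(z)$ one gets, by induction,
$$
P(g^n(z)) = \big(\beta^n + o(1)^n\big) P(z),
$$
more carefully: writing $P(g(z)) = \beta(z) P(z)$ with $\beta(z) \to \beta$ as $z \to 0$, we obtain $P(g^n(z)) = \beta(g^{n-1}(z))\cdots\beta(g(z))\beta(z)\, P(z)$. Now fix a point $z_0 \in \Delta_r^*$ with $P(z_0) \neq 0$ (such a point exists since $P \not\equiv 0$). Since $g^{n-1}(z_0) \to 0$ and $\beta$ is continuous with $\beta(0) = \beta$, the product $\prod_{k=0}^{n-1}\beta(g^k(z_0))$ behaves like $\beta^n$ up to a bounded multiplicative factor; precisely, $\big|\prod_{k=0}^{n-1}\beta(g^k(z_0))\big|$ grows (or decays) essentially like $|\beta|^n$. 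On the other hand, $g^n(z_0) \to 0$, and since $P$ vanishes to infinite order at $0$, for every $N$ we have $|P(w)| \le C_N |w|^N$ near $0$, hence $|P(g^n(z_0))| \le C_N |g^n(z_0)|^N \le C_N C^N |\lambda|^{nN} |z_0|^N$.

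Combining, $|P(z_0)| \cdot \big|\prod_{k=0}^{n-1}\beta(g^k(z_0))\big| = |P(g^n(z_0))| \le C_N C^N |\lambda|^{nN}|z_0|^N$ for all $n$ and all $N$. If $|\beta| \ge 1$ the left side is bounded below by a positive constant (times $|P(z_0)| > 0$) while the right side, choosing $N$ with $|\lambda|^N < 1/2$ say, tends to $0$ — contradiction. If $|\beta| < 1$, the left side decays like $|\beta|^n$ but the right side decays like $|\lambda|^{nN}$, which for $N$ large enough is faster than any fixed geometric rate; comparing rates forces $|\beta|^n \le (\text{const})\,|\lambda|^{nN}$ for all $n$, i.e. $|\beta| \le |\lambda|^N$ for every $N$, forcing $\beta = 0$, contradicting $\beta \in \mathbb{R}^*$. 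Either way we reach a contradiction, so $|g'(0)| = 1$.

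The main obstacle is making the iteration estimate on $\prod_{k}\beta(g^k(z_0))$ rigorous: one must control the error terms $\beta(g^k(z_0)) - \beta$ uniformly and show the product does not degenerate. This is handled by noting $|g^k(z_0)| \le C|\lambda|^k|z_0| \to 0$ geometrically, so $\sum_k |\beta(g^k(z_0)) - \beta| < \infty$ (using that $\beta(\cdot) - \beta$ is continuous and vanishes at $0$, with at worst a modulus-of-continuity bound), whence the infinite product $\prod_{k\ge 0}\big(\beta(g^k(z_0))/\beta\big)$ converges to a nonzero limit — provided $\beta \neq 0$, which we have. A secondary technical point is the reduction to the attracting case and the basic dynamical estimate near an attracting fixed point, but both are classical. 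I would also remark that this argument is exactly the mechanism behind \cite[Lemma $2$]{Ninh1}, the present statement merely allowing the multiplicative factor to be $\beta + o(1)$ rather than a constant, which the iteration absorbs without difficulty.
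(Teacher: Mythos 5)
Your proposal is correct and follows essentially the same route as the paper: reduce to the attracting case $0<|g'(0)|<1$ by passing to $g^{-1}$, iterate the functional equation at a point $z_0$ with $P(z_0)\neq 0$ to get a lower bound of exponential type on $|P(g^n(z_0))|$, and play this against the bound $|P(w)|\leq C_N|w|^N$ coming from $\nu_0(P)=+\infty$ together with $|g^n(z_0)|\lesssim \alpha^n|z_0|$. One caveat: your claim that $\sum_k|\beta(g^k(z_0))-\beta|<\infty$ (hence that the normalized infinite product converges to a nonzero limit) is not implied by the hypotheses, since the $o(1)$ term may tend to $0$ arbitrarily slowly; but this refinement is unnecessary, because the crude bound $|\beta+o(1)|\geq|\beta|/2$ on each factor for $z$ small --- exactly what the paper uses --- already gives the lower bound $(|\beta|/2)^n|P(z_0)|$, and the exponential-rate comparison then goes through unchanged.
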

\begin{proof}
Suppose that there exists a conformal map $g$ with $g(0)=0$ 
and a $\beta\in \mathbb R^*$ such that $P(g(z))=\big(\beta+o(1)\big)P(z)$ 
holds for  
$z \in \Delta_{\epsilon_0}$. 
Then, we have 
$$
P(g(z))=\big(\beta+\gamma(z)\big) P(z), \; z\in \Delta_{\epsilon_0},
$$
where $\gamma$ is a function defined on $\Delta_{\epsilon_0}$ with $\gamma(z)\to 0$ 
as $z\to 0$, which implies that there exists $\delta_0>0$ such that $|\gamma(z)|<|\beta|/2$ 
for any $z\in \Delta_{\delta_0}$. 
We consider the following cases.

\noindent
{\bf Case 1.} $0<|g'(0)|<1$. 
In this case, we can choose $\delta_0$ and $\alpha$ with $0<\delta_0<\epsilon_0$ 
and $|g'(0)|<\alpha <1$ such that $|g(z)|\leq\alpha |z|$ for all $z$ in $ \Delta_{\delta_0}$. 
Fix a point $z_0\in \Delta_{\delta_0}^* $ with $P(z_0)\ne 0$. 
Then, for each positive integer $n$, we get
\begin{equation}\label{eeq1}
\begin{split}
|P(g^n(z_0))|&=|\big(\beta+\gamma(g^{n-1}(z_0))\big)| |P(g^{n-1}(z_0))|=\cdots \\
            &  =|\big(\beta+\gamma(g^{n-1}(z_0))\big)| \cdots |\big(\beta+\gamma(z_0)\big)| |P(z_0)|\\
          &\geq \big(|\beta|-|\gamma(g^{n-1}(z_0))|\big) \cdots \big(|\beta|-|\gamma(z_0)|\big)|P(z_0)|\\
                              &\geq  \big(|\beta|/2\big)^n|P(z_0)|,
\end{split}
\end{equation}
where $g^n$ denotes the composition of $g$ with itself $n$ times. 
Moreover, since $0<\alpha<1$, there exists a positive integer $m_0$ 
such that $|\alpha^{m_0}|<|\beta|/2$. 
Notice that $0<|g^n(z_0)|\leq \alpha^{n}|z_0|$ for any $n\in \mathbb N$. 
Then it follows from (\ref{eeq1}) that
\begin{equation*}
\begin{split}
\frac{|P(g^{n}(z_0))|}{|g^{n}(z_0)|^{m_0}}&\geq\frac{|P((z_0)|}{|z_0|^{m_0}}\Big(\frac{|\beta|/2}
{\alpha^{m_0}}\Big)^n.
\end{split}
\end{equation*}
This yields that $|P(g^{n}(z_0))|/|g^{n}(z_0)|^{m_0} \to +\infty$ as $n\to \infty$, 
which contradicts the fact that $P$ vanishes to infinite order at $0$. 

\noindent
{\bf Case 2.} $1<|g'(0)|$. Since $P(g(z))=(\beta+o(1))P(z)$ for all $z\in \Delta_{\epsilon_0}$, 
it follows that $P(g^{-1}(z))=(1/\beta+o(1))P(z)$ for all $z\in \Delta_{\epsilon_0}$, 
which is impossible because of Case $1$. 

Altogether, $|g'(0)|=1$, and the proof is thus complete.
\end{proof}
\begin{lemma}\label{lemma-3} Let $f:[-r,r]\to \mathbb R~(r>0)$ be a continuous function satisfying $f(0)=0$ and $f\not \equiv 0$. If $\beta$ is a real number such that 
$$
f(t+\beta f(t))=f(t)
$$
for every $t\in [-r,r]$ with $t+\beta f(t)\in [-r,r]$, then $\beta=0$.
\end{lemma}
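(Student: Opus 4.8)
The plan is to argue by contradiction: assume $\beta \neq 0$ and derive a contradiction from the functional equation $f(t+\beta f(t))=f(t)$. Without loss of generality take $\beta > 0$ (if $\beta<0$, replace $f$ by $-f$, which satisfies the same hypotheses with $\beta$ replaced by $-\beta$). Since $f\not\equiv 0$ and $f$ is continuous with $f(0)=0$, there is a point where $f$ is nonzero; by continuity $f$ has constant sign on some subinterval, and by the reflection $t\mapsto -t$ (again replacing $f(t)$ by $f(-t)$ and noting the equation is preserved up to sign of $\beta$) I may assume there is a point $t_0 \in (0,r)$ with $f(t_0) > 0$.

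The key idea is to iterate. Define the map $\Phi(t) = t + \beta f(t)$. The functional equation says $f\circ\Phi = f$ wherever $\Phi(t)$ stays in $[-r,r]$. Starting from $t_0$ with $f(t_0)=c>0$, as long as the iterates $t_k = \Phi^k(t_0)$ remain in $[-r,r]$ we have $f(t_k) = c$ for all $k$, hence $t_{k+1} = t_k + \beta c$, so $t_k = t_0 + k\beta c$ marches off to the right by a fixed positive step $\beta c$. After finitely many steps this forces $t_k$ out of $[-r,r]$; let $k^\ast$ be the last index with $t_{k^\ast}\le r$. Then $t_{k^\ast} \in (r - \beta c,\, r]$, and $f(t_{k^\ast}) = c$. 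I want to then continue the analysis near the right endpoint: the point $s := t_{k^\ast}$ lies in $[-r,r]$, has $f(s)=c>0$, but $s + \beta c = s + \beta f(s) > r$. To turn this into a genuine contradiction I will instead locate such a ``boundary-violating'' point more carefully and exploit continuity at it.

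Here is the cleaner route I would actually write. Let $c = \sup\{ f(t) : t\in[-r,r]\} > 0$ (attained, by compactness and continuity, at some $t^\ast\in[-r,r]$), and consider whether $t^\ast + \beta c \in [-r,r]$. If it is, the equation gives $f(t^\ast+\beta c) = f(t^\ast) = c$, so the point $t^\ast + \beta c$ also attains the maximum; iterating, $t^\ast + k\beta c$ attains the maximum for every $k$ with $t^\ast + k\beta c \in [-r,r]$ — and since the step $\beta c$ is positive and fixed, eventually $t^\ast + k\beta c$ leaves $[-r,r]$, so at the last admissible index we obtain a maximizer $\tau$ with $\tau + \beta c > r$, i.e. $\tau > r - \beta c$ and $f(\tau)=c$. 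Now run the same argument with $\inf$ in place of $\sup$: either way, I get a point in $[-r,r]$ attaining an extreme value of $f$ from which one step of size $\pm\beta c$ (respectively $\pm\beta\,(\min f)$) escapes the interval on a prescribed side. The genuine obstruction, and the part I expect to require the most care, is ruling out the escape: I need to show the equation still yields a contradiction at such a boundary point. For this I would use continuity to find, slightly inside the interval, points $t$ with $f(t)$ close to $c$ but with $\Phi(t)$ still inside $[-r,r]$, forcing $f(\Phi(t))$ close to $c$ as well, while $\Phi(t)$ is bounded away from the maximizer — then a compactness/connectedness argument on the closed set $\{f = c\}$ (or a minimal-step argument: among all maximizers, pick the rightmost one $\tau_{\max}$; then $\tau_{\max}+\beta c$, if in $[-r,r]$, is a maximizer strictly to its right, contradiction; so $\tau_{\max}+\beta c > r$; but by continuity pick $t$ slightly less than $\tau_{\max}$ with $f(t) > 0$ and $f(t)$ close to $c$, large enough that $t + \beta f(t) > r$ is impossible only if $t$ is close enough to $\tau_{\max}$ — here one must track the inequalities quantitatively) closes the gap. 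Once the contradiction is reached in this sup-case (and symmetrically the inf-case), we conclude $\beta=0$.

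The main obstacle is precisely this endpoint analysis: the functional equation is only assumed to hold when $t+\beta f(t)$ stays in $[-r,r]$, so the iteration argument by itself only pushes the problem to the boundary rather than resolving it, and one must combine the ``extremal value propagates along the orbit'' observation with a careful continuity estimate near the extremal point to rule out the boundary escape. Everything else — the reductions on the sign of $\beta$ and the sign of $f$, and the arithmetic of the iteration $t_k = t_0 + k\beta f(t_0)$ — is routine.
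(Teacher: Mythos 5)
There is a genuine gap, and it sits exactly where you yourself flag the difficulty: the boundary analysis at the extremal point cannot be closed, because the configuration you arrive at is not actually contradictory. After your iteration you obtain a maximizer $\tau$ of $f$ with $\tau+\beta c>r$; at such a point the hypothesis simply does not apply (it is only assumed when $t+\beta f(t)\in[-r,r]$), and points $t$ slightly to the left of $\tau$ with $f(t)$ close to $c$ only tell you that $f$ takes values close to $c$ somewhere near $r$ --- which is perfectly consistent. A decisive symptom is that your argument never uses the hypothesis $f(0)=0$ in any essential way, yet the lemma is false without it: $f\equiv c\neq 0$ satisfies $f(t+\beta f(t))=f(t)$ for every admissible $t$ and every $\beta$, and in that example your ``rightmost maximizer escapes the interval'' situation occurs harmlessly. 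So any correct proof must exploit the small values of $f$ near $0$, and your route, which works only with values of $f$ near its maximum, cannot reach a contradiction.

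The paper's proof runs in the opposite direction. Fix $t_0$ with $f(t_0)\neq 0$ and $\epsilon>0$, and let $\delta$ be a modulus of uniform continuity for $\epsilon/2$. Using $f(0)=0$, $f\not\equiv 0$ and the intermediate value theorem, one finds $t$ with $0<|f(t)|<\epsilon/2$ and $|\beta f(t)|<\delta$. The iterated identity $f(t+m\beta f(t))=f(t)$ then produces an orbit with \emph{fixed step of size less than $\delta$} on which $f$ is constantly equal to the tiny value $f(t)$; since the step is smaller than $\delta$, this orbit comes within $\delta$ of $t_0$ before leaving $[-r,r]$, and the triangle inequality gives $|f(t_0)|\le |f(t_0)-f(t+m\beta f(t))|+|f(t)|<\epsilon$. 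Letting $\epsilon\to 0$ forces $f(t_0)=0$, a contradiction. The essential idea you are missing is that small nonzero values of $f$ near the origin generate arbitrarily fine-stepped orbits that sweep the whole interval carrying those small values; the large, fixed-step orbit emanating from a maximizer, which is what you iterate, merely exits the interval and proves nothing.
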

\begin{proof}
Suppose, to derive a contradiction, that there exists a $\beta\ne 0$ such that $f(t+\beta f(t))=f(t)$ for every $t\in [-r,r]$ with $t+\beta f(t)\in [-r,r]$. Then we have
\begin{equation}\label{2014eq-1}
\begin{split}
f(t)&=f(t+\beta f(t))=f\big(t+\beta f(t)+\beta f(t+\beta f(t))\big)\\
    &=f\big(t+2\beta f(t)\big)=\cdots=f(t+m\beta f(t))
\end{split}
\end{equation}
for every $m\in \mathbb N$ and for every $t\in [-r,r]$ with $t+m\beta f(t)\in [-r,r]$. 

Let $t_0\in [-r,r]$ be such that $f(t_0)\ne 0$. Then since $f$ is uniformly continous on $[-r,r]$, for every $\epsilon>0$ there exists $\delta>0$ such that for every $t_1,t_2\in [-r,r]$ with $|t_1-t_2|<\delta$, we have that $|f(t_1)-f(t_2)|<\epsilon/2$. On the other hand, since $f(t)\to 0$ as $t\to 0$ and since $f\not \equiv 0$, one can find $t\in [-\delta/2,\delta/2]$ such that $|\beta f(t)|<\delta$ and $0<|f(t)|<\epsilon/2$. Therefore, there exists an integer $m$ such that $|t+m\beta f(t)-t_0|<\delta$, and thus by (\ref{2014eq-1}) one has  
$$
|f(t_0)|\leq |f(t+m\beta f(t))-f(t_0)|+|f(t+m\beta f(t))|<\epsilon/2+ |f(t)|<\epsilon/2+\epsilon/2=\epsilon.
$$
This implies that $f(t_0)=0$, which is a contradiction. Hence, the proof is complete.
\end{proof}

\begin{lemma}\label{lemma1} 
Let $P$ be a nonzero $\mathcal{C}^\infty$-smooth function with 
$P(0)=0$ and let $g$ be a conformal map satisfying $g(0)=0$, $|g'(0)|=1$, 
and $g\ne \mathrm{id}$. 
If there exists a real number $\delta\in \mathbb R^*$ such that  
$P(g(z))\equiv \delta P(z)$, 
then $\delta=1$. 
Moreover, we have either 
$g'(0)= e^{2\pi i p/q}~(p,q\in \mathbb Z)$ and $g^q=\mathrm{id}$ 
or $g'(0)=e^{2\pi i \theta}$ for some $\theta \in \mathbb R\setminus\mathbb Q$.
\end{lemma}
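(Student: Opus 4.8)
The plan is to split the analysis according to the three cases for $g'(0)$ in the classification above, and to reduce everything to the two rigidity lemmas just established. First I would record the trivial consequence of iterating the functional equation: from $P(g(z))\equiv \delta P(z)$ one gets $P(g^n(z))\equiv \delta^n P(z)$ for all $n\in\mathbb Z$ (using that $g$ is a local biholomorphism to make sense of $g^{-1}$). Since $|g'(0)|=1$ is assumed, Lemma~\ref{lemma2} is not needed to control the derivative, but the iteration will be the common engine in what follows. I would then treat the nonzero real constant $\delta$ first: the claim $\delta=1$ should follow by playing the growth of $|\delta|^n|P(z_0)|$ against the infinite vanishing order of $P$, exactly as in Case~1 of Lemma~\ref{lemma2}, once we know the orbit $g^n(z_0)$ stays in a fixed small disc (which it does in the parabolic and elliptic cases since then $|g^n(z_0)|$ is bounded, being conjugate to a rotation or a rotation-like map). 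If $|\delta|>1$ we get $|P(g^n(z_0))|\to\infty$ on a bounded orbit, impossible for a continuous function; if $|\delta|<1$ replace $g$ by $g^{-1}$. Hence $\delta=1$.

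Next I would dispose of the case $g'(0)=1$, i.e. $g$ tangent to the identity but $g\neq\mathrm{id}$. Here the idea is to pass to the real trace. Writing $g(z)=z+a_k z^k+\cdots$ with $a_k\neq 0$ (the first nonvanishing higher-order term), the functional equation $P(g(z))=P(z)$ restricted to a suitable real curve or, more robustly, an application of Lemma~\ref{lemma-3} with $f$ built from $P$ along the flow direction of $g$, should force $P\equiv 0$ near $0$, contradicting the hypothesis that $P$ is nonzero. Concretely, a tangent-to-identity germ that is not the identity has a well-understood iteration behaviour (petals, Leau--Fatou), and on a petal the orbit $g^n(z)\to 0$ while $P$ is constant along it; combined with $P$ vanishing to infinite order this is the kind of contradiction Lemma~\ref{lemma-3} is designed to extract once one sets up the right one-variable reduction. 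So the parabolic-with-$p/q=0$ sub-case and, more generally, $g'(0)=1$, $g\neq\mathrm{id}$, cannot occur — which is consistent with the statement, since it only allows $g'(0)=e^{2\pi i p/q}$ together with $g^q=\mathrm{id}$, and $g^q=\mathrm{id}$ with $q=1$ would mean $g=\mathrm{id}$.

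Finally, assume $g'(0)=e^{2\pi i\theta}$. If $\theta\in\mathbb Q$, write $\theta=p/q$ in lowest terms; then $h:=g^q$ is tangent to the identity and satisfies $P(h(z))\equiv P(z)$, so by the previous paragraph $h=\mathrm{id}$, i.e. $g^q=\mathrm{id}$, which is the asserted conclusion in the parabolic case. If $\theta\notin\mathbb Q$ there is nothing further to prove: $g$ is simply declared elliptic and the statement asks for no additional constraint. Thus in every case $\delta=1$ and $g$ is either a finite-order rotation-type map ($g'(0)=e^{2\pi i p/q}$, $g^q=\mathrm{id}$) or elliptic, completing the proof.

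I expect the main obstacle to be the tangent-to-identity case: making rigorous the passage from the complex functional equation $P\circ g\equiv P$ to an application of the real-variable Lemma~\ref{lemma-3}. One must choose the right real parametrization of a petal (or of the orbit) so that $t\mapsto t+\beta f(t)$ literally encodes the dynamics of $g$, and one must ensure the continuous function $f$ obtained from $P$ is nonzero — this uses hypothesis that $P\not\equiv 0$, and is where the smooth (not merely formal) nature of $P$ and the infinite vanishing order interact. The other cases are routine reductions to this one or to the already-proven Lemma~\ref{lemma2}.
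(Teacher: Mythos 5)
Your overall architecture matches the paper's: normalize $|\delta|\ge 1$ by passing to $g^{-1}$, iterate the functional equation to get $P(g^n(z))=\delta^n P(z)$, kill the tangent-to-identity case with Leau--Fatou petals, reduce the parabolic case with $g^q\ne\mathrm{id}$ to that case via $g^q$, and treat the remaining cases by finding an orbit that stays in a compact subset of a punctured neighbourhood. Two steps, however, would not survive being written out. First, your argument for $\delta=1$ only yields $|\delta|=1$: boundedness of $|\delta|^n|P(z_0)|$ (and of $|\delta|^{-n}|P(z_0)|$ after replacing $g$ by $g^{-1}$) gives $\delta=\pm 1$, and ``Hence $\delta=1$'' is a non sequitur --- nothing you say excludes $\delta=-1$. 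That this is not pedantry is shown by $g(z)=-z$ together with any $P$ that is odd under $z\mapsto -z$: there $P\circ g=-P$ with a perfectly bounded orbit, so any proof of $\delta=1$ must use more than boundedness. The paper addresses this point by arguing that $\{\delta^n\}=\{P(g^n(z))/P(z)\}$ must converge along the chosen orbit; you need some input of this kind. Second, in the elliptic case you justify boundedness of the orbit by saying $g$ is ``conjugate to a rotation or a rotation-like map.'' An elliptic germ need not be holomorphically (or even topologically) linearizable (Cremer points), so this is false as a blanket statement; the paper instead invokes \cite[Proposition~4.4]{Abate} precisely to produce a point $z$ with $P(z)\ne 0$ whose orbit is relatively compact in a punctured neighbourhood, and some such citation or argument cannot be avoided.

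Two smaller points. Your plan to route the tangent-to-identity case through Lemma~\ref{lemma-3} is a detour the paper does not take and that you yourself flag as the main obstacle; the paper's argument is the direct one: by Theorem~\ref{flower} there is $z$ near $0$ with $P(z)\ne 0$ and $g^n(z)\to 0$, whence $\delta^n P(z)=P(g^n(z))\to P(0)=0$ forces $|\delta|<1$, contradicting the normalization $|\delta|\ge 1$. No real-variable lemma is needed, and Lemma~\ref{lemma-3} (whose hypothesis is a very specific self-map $t\mapsto t+\beta f(t)$ of an interval) does not obviously encode the petal dynamics. Finally, your order of quantifiers is slightly circular: you establish $\delta=1$ ``in the parabolic and elliptic cases'' using bounded orbits before you know that the parabolic case satisfies $g^q=\mathrm{id}$; if $g^q\ne\mathrm{id}$ the orbit of $g$ is governed by the petal dynamics of the tangent-to-identity map $g^q$ and is not contained in a compact subset of the punctured disc, so the boundedness you rely on is exactly what is in question. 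The paper avoids this by splitting the parabolic case on whether $g^q=\mathrm{id}$ from the outset.
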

\begin{proof} Replacing $g$ by its inverse if necessary, one can assume that $|\delta|\geq 1$. Now we divide the proof into three cases as follows:

\noindent
{\bf Case 1. $g'(0)=1$.} As a consequence of the Leau-Fatou flower theorem 
(cf. Theorem \ref{flower} in Appendix A.1), 
there exists a point $z$ in a small neighborhood of the origin with $P(z)\ne 0$ 
such that $g^n(z)\to 0$ as $n\to \infty$. 
Since $P(g^n(z))=(\delta)^n P(z)$ and $\lim_{n\to +\infty} P(g^n(z))=P(0)=0$, 
we have $0<|\delta|<1$, which is a contradiction.    

\noindent
{\bf Case 2. $\lambda:= g'(0)=e^{2\pi i p/q}~(p,q\in \mathbb Z)$.} 
Suppose that $g^q=\mathrm{id}$, then by \cite[Prop. $3.2$]{Abate}, 
there exists $z$ in a small neighborhood of $0$ satisfying $P(z)\ne 0$ such that 
the orbit $\{g^n(z)\}$ is contained a relativity compact subset of  
some punctured neighborhood.  
Therefore, by assumption $P(g(z))\equiv \delta P(z)$, 
the sequence $\{\delta^n\}$ must be convergent. 
This means that $\delta=1$. 
In the case of $g^q\ne\mathrm{id}$, we have $g^q(z)=z+\cdots$ and $P(g^q(z))\equiv \delta^q P(z)$. 
This is absurd because of Case $1$ with $g$ being 
replaced by $g^q$.   
 
\noindent
{\bf Case 3. $\lambda:= g'(0)=e^{2\pi i \theta}~(\theta \not \in \mathbb Q)$.} 
By \cite[Proposition~$4.4$]{Abate}, we may assume that there exists $z$ 
in a small neighborhood of $0$ satisfying $P(z)\ne 0$ such that 
the orbit $\{g^n(z)\}$ is contained a relativity compact subset of 
some punctured neighborhood. 
Therefore, the same argument as in Case $2$ shows that $\delta=1$.
Altogether, the proof is complete.
\end{proof}

\section{Explicit description for $\mathrm{G_2}(M_P,0)$}
In this section, we are going to give an explicit description 
for the subgroup $\mathrm{G_2}(M_P,0)$ of the stability group of $M_P$. 
By virtue of Lemma \ref{lemma1}, $\mathrm{G_2}(M_P,0)$ contains only 
CR automorphisms of $M_P$ defined by
$$ 
(z_1,z_2)\mapsto (z_1,g_2(z_2)),
$$
where $g_2$ is either parabolic or elliptic. 
Conversely, given either a parabolic $g$ with $g^q=\mathrm{id}$ 
for some positive integer $q$ or an elliptic $g$, 
we shall show that there exist some infinite type models $(M_P,0)$ 
such that the mapping $(z_1,z_2)\mapsto (z_1,g(z_2))$ belongs to 
$\mathrm{G_2}(M_P,0)$. 

First of all, we need the following lemma.
\begin{lemma} 
If $P(e^{2\pi i\theta}z)\equiv P(z)$ for some 
$\theta \in\mathbb R\setminus \mathbb{Q}$, 
then $P(z)\equiv P(|z|)$, i.e., $P$ is rotational.
\end{lemma}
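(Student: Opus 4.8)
The plan is to exploit the fact that the rotation $z \mapsto e^{2\pi i \theta} z$ with $\theta$ irrational has dense orbits on every circle $\{|z| = r\}$, and that $P$ is continuous. First I would fix an arbitrary point $z_0$ in the domain of $P$ and set $r = |z_0|$. Writing $z_0 = r e^{i\phi_0}$, I want to show $P(z_0) = P(r)$, i.e. that $P$ takes the same value at all points of the circle of radius $r$.

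The key step is iteration: the hypothesis $P(e^{2\pi i \theta} z) \equiv P(z)$, applied repeatedly, gives $P(e^{2\pi i n \theta} z) \equiv P(z)$ for every $n \in \mathbb{N}$ (and in fact every $n \in \mathbb{Z}$, using the inverse rotation). In particular $P(e^{2\pi i n \theta} z_0) = P(z_0)$ for all $n$. Since $\theta \notin \mathbb{Q}$, Weyl's equidistribution theorem (or just the classical fact that $\{e^{2\pi i n\theta} : n \in \mathbb{N}\}$ is dense in the unit circle) shows that the set $\{e^{2\pi i n \theta} z_0 : n \in \mathbb{N}\}$ is dense in the circle $\{|z| = r\}$. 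Combined with the continuity of $P$, we conclude that $P$ is constant on $\{|z| = r\}$, equal to $P(z_0)$. Evaluating at the point $r$ on the positive real axis (which lies on this circle), we get $P(z_0) = P(r) = P(|z_0|)$. Since $z_0$ was arbitrary, $P(z) \equiv P(|z|)$.

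One small technical point to address: the domain of $P$ is a disc $\Delta_{\epsilon_0}$ centered at the origin, which is rotation-invariant, so all the iterates $e^{2\pi i n \theta} z_0$ remain in the domain and the closure argument stays inside $\Delta_{\epsilon_0}$; there is no boundary issue. I expect the main (and essentially only) obstacle to be invoking the density of the orbit cleanly — this is elementary, following either from the pigeonhole principle (two of the points $e^{2\pi i n\theta}$ among any $N+1$ of them are within $2\pi/N$ of each other, so differences of exponents generate rotations by arbitrarily small angles) or from a direct appeal to Weyl equidistribution; either way it is standard and short. The rest is a routine continuity argument, so the proof should be brief.
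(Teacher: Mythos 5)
Your proposal is correct and follows exactly the same route as the paper: iterate the hypothesis to get $P(e^{2\pi i n\theta}z)\equiv P(z)$, use the density of the irrational rotation orbit in the circle $\{|w|=|z|\}$, and conclude by continuity of $P$. The paper's proof is a two-line version of precisely this argument.
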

\begin{proof}
We note that $P(e^{2\pi n i\theta}z)\equiv P(z)$ for any $n\in \mathbb N$ 
and $\overline{\{e^{2\pi n i\theta}z\colon n\in \mathbb N\}}=\mathbb S_{|z|}$, 
where $\mathbb S_r:=\{z\in \mathbb C\colon |z|=r\}$ for $r>0$. 
Therefore, because of the continuity of P, we conclude that $P(z)\equiv P(|z|)$. 
\end{proof}
\subsection{The Parabolic Case}
\begin{lemma}\label{parabolic} 
Let $g(z)=e^{2\pi i p/q}z+\cdots$ be a conformal map  
with $\lambda=e^{2\pi i p/q}$ being a primitive root of the unity. 
If $g^q=\mathrm{id}$, then there exists an infinite type model $M_P$ such that 
$(z_1,z_2)\mapsto (z_1,g^j(z_2))$ belongs to $\mathrm{G_2}(M_P,0)$ 
for every $j=1,2,\ldots, q-1$.
\end{lemma}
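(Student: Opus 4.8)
The plan is to realize the desired model by first linearizing $g$ and then pulling back a flat, rotationally symmetric weight. Since $g^q=\mathrm{id}$, the germ $g$ is holomorphically linearizable via the standard averaging trick: setting
$$\varphi(z):=\frac{1}{q}\sum_{k=0}^{q-1}\lambda^{-k}g^{k}(z)\qquad(g^{0}:=\mathrm{id}),$$
one has $\varphi(0)=0$ and $\varphi'(0)=\frac{1}{q}\sum_{k=0}^{q-1}\lambda^{-k}(g^{k})'(0)=\frac{1}{q}\sum_{k=0}^{q-1}\lambda^{-k}\lambda^{k}=1$, so $\varphi$ is a biholomorphism near the origin. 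Reindexing the sum and using $g^{q}=\mathrm{id}$ together with $\lambda^{q}=1$ to identify the wrap-around term yields $\varphi\circ g=\lambda\,\varphi$, and hence $\varphi\circ g^{j}=\lambda^{j}\varphi$ for every $j$.

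Next I would choose a flat invariant weight. Let $Q\colon\mathbb C\to\mathbb R$ be defined by $Q(w)=e^{-1/|w|^{2}}$ for $w\neq0$ and $Q(0)=0$. Then $Q$ is $\mathcal C^\infty$-smooth, positive off the origin, vanishes to infinite order at $w=0$, and is rotationally symmetric, so $Q(\lambda^{j}w)\equiv Q(w)$ for all $j$. Put $P:=Q\circ\varphi$. Because $\varphi$ is a biholomorphism fixing $0$, $P$ is $\mathcal C^\infty$-smooth, real-valued, not identically zero, and vanishes to infinite order at $z_2=0$; moreover $P>0$ on a punctured neighborhood of $0$, so the connected component of $0$ in the zero set of $P$ is $\{0\}$. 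Thus $M_P:=\{(z_1,z_2)\colon \mathrm{Re}\,z_1+P(z_2)=0\}$ is an infinite type model.

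Finally I would verify membership in $\mathrm{G_2}(M_P,0)$. For each $j=1,\dots,q-1$ the map $g^{j}$ is a conformal germ with $g^{j}(0)=0$ and $|(g^{j})'(0)|=|\lambda|^{j}=1$, invertible near $0$ (its inverse being $g^{q-j}$), and
$$P\bigl(g^{j}(z_2)\bigr)=Q\bigl(\varphi(g^{j}(z_2))\bigr)=Q\bigl(\lambda^{j}\varphi(z_2)\bigr)=Q\bigl(\varphi(z_2)\bigr)=P(z_2).$$
Hence $(z_1,z_2)\mapsto(z_1,g^{j}(z_2))$ preserves the equation $\mathrm{Re}\,z_1+P(z_2)=0$, fixes the origin, and is a biholomorphism; by the definition of $\mathrm{G_2}(M_P,0)$ it belongs to this group, which proves the lemma.

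The construction is entirely explicit, so there is no real obstacle; the only point requiring care is the linearization of $g$, which is precisely where the hypothesis $g^{q}=\mathrm{id}$ is used (the averaged map $\varphi$ conjugates $g$ to the rotation $z\mapsto\lambda z$). I also note that $Q$ may be replaced by any nonzero $\mathcal C^\infty$-smooth germ that is flat at $0$ and invariant under the cyclic group generated by $\lambda$, which furnishes further, not necessarily rotational, examples.
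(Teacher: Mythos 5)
Your proof is correct. The only point worth comparing: the paper gets the invariant weight by averaging the \emph{function} over the cyclic group generated by $g$, namely $P(z):=\tilde P(z)+\tilde P(g(z))+\cdots+\tilde P(g^{q-1}(z))$ for an arbitrary flat $\tilde P$, whereas you first average the \emph{coordinate} to linearize $g$ (your $\varphi=\frac1q\sum\lambda^{-k}g^k$ realizes explicitly the conjugation to $z\mapsto\lambda z$ that the paper only quotes from Abate) and then pull back a rotationally symmetric flat weight. Both are one-line verifications once the averaging is done; the paper's version produces invariant $P$ from any flat germ and in particular yields models that are not rotationally symmetric even after conjugation, while your basic construction with $Q(w)=e^{-1/|w|^2}$ gives a model biholomorphic to a rotational one (whose stability group is then a full circle, as in the elliptic case) --- though your closing remark, replacing $Q$ by any flat germ invariant under multiplication by $\lambda$, recovers the same generality. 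One small bookkeeping point in your favor: you explicitly check that $P=Q\circ\varphi$ is flat at $0$ and that its zero set near $0$ is $\{0\}$, so $M_P$ is genuinely an infinite type model; the paper's averaged $P$ could in principle vanish identically for an unlucky $\tilde P$, a degenerate case it does not comment on.
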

\begin{proof}
Suppose that $g(z)=e^{2\pi i p/q}z+\cdots$ is a conformal map 
such that $\lambda=e^{2\pi i p/q}$ is a primitive root of 
the unity satisfying $g^q=\mathrm{id}$. 
It is known that $g$ is holomorphically locally conjugated to $h(z)=\lambda z$ 
(cf. \cite[Proposition~$3.2$]{Abate}).
Let $\tilde P$ be a $\mathrm{C}^\infty$-smooth function with 
$\nu_0(\tilde P)=+\infty$. 
Define a $\mathrm{C}^\infty$-smooth function by setting
$$  
P(z):=\tilde P(z)+\tilde P(g(z))+\cdots+\tilde P(g^{q-1}(z)).
$$
Then it is easy to see that $P(g(z))\equiv P(z)$. 
Thus $f_j(z_1, z_2)=(z_1, g^{j}(z_2)) \in \mathrm{G_2}(M_P,0), j=1, \dots, q-1,$ 
are biholomorphic.  
\end{proof}
\begin{remark} 
In the case of $g^q\ne\mathrm{id}$. 
We have $g^d(z)=z+\cdots$, and therefore $P(z+\cdots)=P(g^q(z))=P(z)$. 
It follows from Lemma \ref{lemma1} that there is no infinite type model $M_P$ 
satisfying $P\not \equiv 0$ on some petal 
such that $(z_1,z_2)\mapsto (z_1,g(z_2))$ belongs to $\mathrm{G_2}(M_P,0)$. 
\end{remark}
\subsection{The Elliptic Cases}
\begin{lemma}\label{elliptic} 
Let $g(z)=e^{2\pi i\theta }z+\cdots$ be a conformal map with 
$\theta \not \in \mathbb{Q}$. 
Then there exists an infinite type model $M_P$ such that 
$(z_1,z_2)\mapsto (z_1,g(z_2))$ 
belongs to $\mathrm{G_2}(M_P,0)$. 
Moreover, $M_P$ is biholomorphically equivalent to a rotationally symmetric model 
$M_{\tilde P}$.
\end{lemma}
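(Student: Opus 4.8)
The plan is to produce the desired model directly from the \emph{given} germ $g$, rather than from the linear rotation, by transporting a flat rotationally symmetric defining function through a holomorphic linearization of $g$. This forces $P$ to be exactly $g$-invariant and makes the equivalence with a symmetric model transparent. First I would record the formal normalization: since $\lambda:=g'(0)=e^{2\pi i\theta}$ with $\theta\notin\mathbb Q$, $\lambda$ is not a root of unity, so $\lambda^k\ne\lambda$ for every integer $k\ge 2$. Hence all homological equations are solvable and there is a unique formal power series $\varphi(z)=z+\sum_{k\ge 2}c_kz^k$ with $\varphi\circ g=\lambda\,\varphi$; that is, $\varphi$ formally conjugates $g$ to $h(z)=\lambda z$.

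Next, assuming $\varphi$ converges (so that $g=\varphi^{-1}\circ h\circ\varphi$ with $\varphi$ a biholomorphism, $\varphi(0)=0$, $\varphi'(0)=1$), I would build $P$ as follows. Fix a $\mathcal C^\infty$ function $\chi:[0,\infty)\to[0,\infty)$ that is flat at $0$ and positive on $(0,\infty)$, e.g. $\chi(t)=e^{-1/t}$ for $t>0$ and $\chi(0)=0$, and set $\tilde P(w):=\chi(|w|^2)$. Then $\tilde P$ is rotationally symmetric, $\tilde P\not\equiv 0$, $\nu_0(\tilde P)=+\infty$, and since $|\lambda|=1$ one has $\tilde P(h(w))=\chi(|w|^2)=\tilde P(w)$. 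Define $P:=\tilde P\circ\varphi$. Because $\varphi$ is a biholomorphism fixing $0$, $|\varphi(z)|$ is comparable to $|z|$ near the origin, so $P(z)=\chi(|\varphi(z)|^2)$ is $\mathcal C^\infty$, nonzero, and flat, i.e. $\nu_0(P)=+\infty$; moreover
$$
P(g(z))=\tilde P(\varphi(g(z)))=\tilde P(h(\varphi(z)))=\tilde P(\varphi(z))=P(z).
$$
Thus $|g'(0)|=1$ and $P(g(z))\equiv P(z)$, so $(z_1,z_2)\mapsto(z_1,g(z_2))\in\mathrm{G_2}(M_P,0)$ and $M_P$ is an infinite type model.

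For the ``moreover'' part, the biholomorphism $\Phi(z_1,z_2)=(z_1,\varphi(z_2))$ fixes $0$ and carries $M_P=\{\mathrm{Re}\,z_1+\tilde P(\varphi(z_2))=0\}$ onto $M_{\tilde P}=\{\mathrm{Re}\,w_1+\tilde P(w_2)=0\}$, and $\tilde P(w)=\tilde P(|w|)$ is rotationally symmetric, giving the claimed equivalence. Note that this whole step uses the \emph{given} $g$ and its own linearizer $\varphi$; at no point is $g$ replaced by the model rotation $h$.

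The main obstacle is exactly the convergence of the formal linearizer $\varphi$ for an \emph{arbitrary} irrational $\theta$, i.e. the small-divisor problem. For Brjuno (in particular Diophantine) $\theta$ the series converges by the Siegel--Brjuno theorem and the construction above applies verbatim. For non-Brjuno $\theta$, however, $g$ may be a Cremer germ admitting no Siegel disk, $\varphi$ diverges, and the iterates $\{g^n\}$ fail to be normal near $0$, so no holomorphic conjugacy to a rotation exists and the route above breaks down. In that regime I would instead try to manufacture a $g$-invariant flat function without linearizing: starting from the nested P\'erez-Marco hedgehogs (compact connected fully invariant sets shrinking to $\{0\}$) one would attempt to construct a continuous, then smooth, $g$-invariant exhaustion $u$ with $u(0)=0$ and $u>0$ on a punctured neighborhood, and put $P=\chi\circ u$. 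Verifying smoothness and flatness of such an invariant, and then recovering the asserted equivalence to a rotationally symmetric model (which genuinely seems to require a holomorphic linearization), is where the real difficulty lies, and is likely to force either this deeper dynamical input or an explicit Brjuno hypothesis on $\theta$.
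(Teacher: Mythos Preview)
Your construction is essentially identical to the paper's own proof: the paper also invokes the (formal) conjugacy $g=\varphi^{-1}\circ R_\theta\circ\varphi$, picks a rotationally symmetric flat $\tilde P$, sets $P:=\tilde P\circ\varphi$, checks $P\circ g=P$, and asserts that $(z_1,z_2)\mapsto(z_1,\varphi(z_2))$ gives the biholomorphic equivalence $M_P\cong M_{\tilde P}$. So at the level of strategy there is nothing to compare.

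What is worth noting is that you have been more careful than the paper about the one genuine issue. The paper cites only the \emph{formal} linearization (Abate, Prop.~4.4) and then, without further comment, uses $\varphi$ as if it were a convergent biholomorphism---writing $P(z)=\tilde P(|\varphi(z)|)$ as a $\mathcal C^\infty$ function and using $\varphi$ to produce an honest biholomorphism between $M_P$ and $M_{\tilde P}$. As you correctly point out, for a Cremer multiplier (non-Brjuno $\theta$) the formal linearizer diverges, $g$ admits no Siegel disk, and neither step is justified; in that regime the ``moreover'' clause in particular (biholomorphic equivalence to a rotationally symmetric model) cannot be obtained by this route, since any such equivalence would linearize $g$. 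So the gap you flag is real, and it is shared by the paper's argument; your instinct that a Brjuno-type hypothesis (or a separate, dynamically much deeper construction in the Cremer case) is needed is on target.
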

\begin{proof}
Suppose that $g(z)=e^{2\pi i\theta }z+\cdots$ is a conformal map 
with $\theta \not \in \mathbb{Q}$. 
Then it is known that $g$ is formally locally conjugated to 
$R_\theta(z)= e^{2\pi i\theta} z$ (cf. \cite[Proposition~$4.4$]{Abate}), i.e., 
there exists a formally conformal map $\varphi$ at $0$ with $\varphi(0)=0$ such that
$$
g=\varphi^{-1}\circ R_\theta\circ \varphi.
$$
Let $\tilde P$ be a rotational $\mathcal{C}^\infty$-smooth function 
with $\nu_0(\tilde P)=+\infty$. 
Define a $\mathcal{C}^\infty$-smooth function by setting
$$  
P(z)= \tilde P(\varphi(z))=\tilde P(|\varphi(z)|).
$$
Then 
$P(g(z))=\tilde P(\varphi\circ g(z))=\tilde P( R_\theta\circ \varphi(z))
=\tilde P( |R_\theta\circ \varphi(z)|)=\tilde P( |\varphi(z)|)=P(z)$. 
This means that $(z_1,z_2)\mapsto(z_1,g(z_2))$ belongs to $\mathrm{G_2}(M_P,0)$. 
Moreover $f_t(z_1,z_2):=(z_1, \varphi^{-1}\circ R_t\circ \varphi(z_2))$ is a mapping in 
$\mathrm{G_2}(M_P,0)$ for all $t\in \mathbb R$. 
In addition, it is easy to see that $M_P$ is biholomorphically equivalent to $M_{\tilde P}$, 
which is rotationally symmetric.
\end{proof}
\section{Proofs of Theorems~\ref{T1} and \ref{T11}} 
This section is devoted to the proofs of Theorems~\ref{T1} and \ref{T11}. 
For the sake of smooth exposition, we shall present these proofs in two subsections.  
\subsection{Proof of Theorem~\ref{T1}}
\begin{proof}[Proof of Theorem~\ref{T1}]

\noindent
{\bf (a)} 
Let $H(z_1,z_2)=h_1(z_1,z_2)\partial_{z_1}+h_2(z_1,z_2)\partial_{z_2}\in \mathfrak{aut}(M_P,0)$ 
be arbitrary and 
$\{\phi_t\}_{t\in\mathbb R}\subset \mathrm{Aut}(M_P)$ 
the one-parameter subgroup generated by $H$. Since $\phi_t$ is biholomorphic 
for every $t\in \mathbb R$, 
the set $\{\phi_t(0)\colon t\in\mathbb R\}$ is contained in $P_\infty(M_P)$. 
We remark that the connected component of $0$ in 
$P_\infty(M_P)$ is $\{(is,0)\colon s\in \mathbb R\}$. 
Therefore, we have $\phi_t(0,0)\subset \{(is,0)\colon s\in\mathbb R\}$. 
Consequently, we obtain $\mathrm{Re}~h_1(0,0)=0$ and $h_2(0,0)=0$. 
Hence, the holomorphic vector field $H-i\beta \partial_{z_1}$, 
where $\beta:=\mathrm{Im}~h_1(0,0)$, 
belongs to $\mathfrak{aut}_0(M_P,0$, which ends the proof.    

\noindent
{\bf (b)} In the light of $\mathrm{(a)}$, we see that 
$\mathfrak{aut}(M_P,0)=\mathfrak{g}_{-1}$, 
i.e., it is generated by $i\partial_{z_1}$. 
Denote by $\{T_t\}_{t\in\mathbb R}$ the one-parameter subgroup generated by 
$i\partial_{z_1}$, i.e., it is given by
$$
T_t(z_1,z_2)=(z_1+it,z_2), \; t \in \mathbb R. 
$$
Let $f=(f_1,f_2)\in \mathrm{Aut}\big(M_P,0\big)$ be arbitrary. 
We define $\{F_t\}_{t\in \mathbb R}$ the family of automorphisms 
by setting $F_t:=f\circ T_{-t}\circ f^{-1}$. 
Then it follows that $\{F_t\}_{t\in \mathbb R}$ is a one-parameter subgroup of 
$\mathrm{Aut}\big(M_P)$. 
Since $\mathfrak{aut}(M_P,0)=\mathfrak{g}_{-1}$, 
it follows that the holomorphic vector field generated by $\{F_t\}_{t\in \mathbb R}$ 
belongs to $\mathfrak{g}_{-1}$. 
This means that there exists a real number $\delta$ such that $F_t=T_{\delta t}$ 
for all $t\in \mathbb R$, which yields that
\begin{equation}\label{tte1}
f= T_{\delta t}\circ f\circ T_{t}, \; t \in \mathbb R .
\end{equation}
We note that if $\delta=0$, then $f=f\circ T_t$ and thus $T_t=\text{id}$ 
for any $t\in \mathbb R$, which is a contradiction. 
Hence, we may assume that $\delta\ne 0$.

We shall prove that $\delta=-1$. 
Indeed, the equation (\ref{tte1}) is equivalent to
\begin{equation*}
\begin{split}
f_1(z_1,z_2)&= f_1(z_1+it,z_2)+i\delta t, \\
f_2(z_1,z_2)&= f_2(z_1+it,z_2)
\end{split}
\end{equation*}
for all $t\in \mathbb R $. 
This implies that $\frac{\partial}{\partial z_1}f_1(z_1,z_2)=-\delta$ and 
$\frac{\partial}{\partial z_1}f_2(z_1,z_2)=0$. 
Thus, the holomorphic functions $f_1$ and $f_2$ can be re-written as follows:
\begin{equation}\label{tte3}
\begin{split}
f_1(z_1,z_2)&= -\delta z_1 +g_1(z_2),  \\
f_2(z_1,z_2)&= g_2(z_2),
\end{split}
\end{equation}
where $g_1,g_2$ are holomorphic functions on a neighborhood of $z_2=0$.

Since $M_P$ is invariant under $f$, one has
\begin{equation}\label{tte4}
\begin{split}
&\mathrm{Re}~f_1\big(it-P(z_2),z_2\big)+P\Big(f_2\big(it-P(z_2),z_2\big)\Big)=0
\end{split}
\end{equation}
for all $(z_2,t)\in \Delta_{\epsilon_0}\times (-\delta_0,\delta_0)$ for some 
$\epsilon_0,\delta_0>0$.

It follows from (\ref{tte4}) with $t=0$ and (\ref{tte3}) that
\begin{equation*}
\begin{split}
&\delta P(z_2)+\mathrm{Re}~g_1(z_2)+P\Big(g_2(z_2)\Big)=0
\end{split}
\end{equation*}
for all $z_2\in \Delta_{\epsilon_0}$. 
Since $\nu_0(P)=+\infty$, we have $\nu_0(g_1)=+\infty$, and hence $g_1\equiv 0$. 
This tells us that
\begin{equation*}
\begin{split}
P\Big(g_2(z_2)\Big)=-\delta P(z_2)
\end{split}
\end{equation*}
for all $z_2\in \Delta_{\epsilon_0}$. 
Therefore, Lemmas \ref{lemma2} and \ref{lemma1} tell us that $|g'(0)|=1$ and 
$\delta=1$. Hence, $f\in \mathrm{G_2}(M_P,0)$, which finishes the proof. 
\end{proof}

We note that if $P$ vanishes to infinite order at only the origin, 
then we have the following corollary.
\begin{corollary}\label{co1}
let $(M_P, 0)$ be as in Theorem~\ref{T1}. Assume that 
\begin{itemize}
\item[(i)] $P(z_2) \not \equiv 0$ on a neighborhood of $z_2= 0$, and
\item[(ii)] $S_\infty(P)=\{0\}$.
\end{itemize}
If $\mathfrak{aut}_0(M_P,0)$ is trivial, then
$$
\mathrm{Aut}(M_P)=\mathrm{G_2}(M_P,0)\oplus \mathrm{T^1}(M_P,0),
$$
where $\mathrm{T^1}(M_P,0)$ denotes the set of all translations $T^1_t$, 
$t\in \mathbb R$, 
defined by $T^1_t(z_1,z_2)=(z_1+it,z_2)$.
\end{corollary}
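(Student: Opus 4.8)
The plan is to bootstrap from Theorem~\ref{T1}(b) using the biholomorphic invariance of the set of infinite type points. First I would observe that the hypotheses of the corollary are at least as strong as those of Theorem~\ref{T1}: the standing assumption on $M_P$ gives $\nu_0(P)=+\infty$, so $0\in S_\infty(P)$, and the condition $S_\infty(P)=\{0\}$ trivially forces the connected component of $0$ in $S_\infty(P)$ to be $\{0\}$. Hence, since $\mathfrak{aut}_0(M_P,0)$ is assumed trivial, Theorem~\ref{T1}(b) applies and gives $\mathrm{Aut}(M_P,0)=\mathrm{G_2}(M_P,0)$. Next I would pin down the infinite type locus: by the remark in \S~1 describing $P_\infty(M_P)$, together with $S_\infty(P)=\{0\}$ and $P(0)=0$, one has $P_\infty(M_P)=\{(it,0)\colon t\in\mathbb R\}$, a single real line.

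Now let $f=(f_1,f_2)\in\mathrm{Aut}(M_P)$ be arbitrary. Since being a point of infinite type is preserved under biholomorphisms, $f$ maps $P_\infty(M_P)$ into itself; in particular $f(0)=(is_0,0)$ for some $s_0\in\mathbb R$ (no connectedness argument is needed, as $P_\infty(M_P)$ already \emph{equals} the line $\{(it,0)\}$ throughout the domain of definition). The translation $T^1_{-s_0}(z_1,z_2)=(z_1-is_0,z_2)$ belongs to $\mathrm{Aut}(M_P)$, because $\mathrm{Re}(z_1-is_0)+P(z_2)=\mathrm{Re}~z_1+P(z_2)$; consequently $g:=T^1_{-s_0}\circ f\in\mathrm{Aut}(M_P)$ with $g(0)=0$, so $g\in\mathrm{Aut}(M_P,0)=\mathrm{G_2}(M_P,0)$. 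Therefore $f=T^1_{s_0}\circ g$ with $T^1_{s_0}\in\mathrm{T^1}(M_P,0)$ and $g\in\mathrm{G_2}(M_P,0)$, which proves $\mathrm{Aut}(M_P)\subseteq\mathrm{T^1}(M_P,0)\cdot\mathrm{G_2}(M_P,0)$; the reverse inclusion is immediate since both $\mathrm{T^1}(M_P,0)$ and $\mathrm{G_2}(M_P,0)$ are subgroups of $\mathrm{Aut}(M_P)$. Finally, for the direct-sum statement, I would check that $\mathrm{T^1}(M_P,0)\cap\mathrm{G_2}(M_P,0)=\{\mathrm{id}\}$ (an element of $\mathrm{G_2}$ fixes $0$, and the only translation $T^1_t$ fixing $0$ is the identity) and that every $T^1_t$ commutes with every $(z_1,z_2)\mapsto(z_1,g_2(z_2))$ in $\mathrm{G_2}(M_P,0)$, since neither map alters $z_2$ and the $z_1$-translation does not depend on $z_2$; this realizes $\mathrm{Aut}(M_P)$ as the internal direct product of the two subgroups.

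I do not expect a genuine obstacle here: once Theorem~\ref{T1}(b) is available, the argument is essentially bookkeeping. The only point that deserves a line of care is that $f(0)$ must land in the local piece of $M_P$ for which Theorem~\ref{T1}(b) was established, and this is automatic precisely because $P_\infty(M_P)$ is exactly the line $\{(it,0)\}$ and $s_0$ is small.
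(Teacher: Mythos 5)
Your proposal is correct and follows essentially the same route as the paper: both arguments use the biholomorphic invariance of infinite type points together with $S_\infty(P)=\{0\}$ to locate $f(0,0)$ on the line $\{(it,0)\}$, compose with the translation $T^1_{-t_0}$ to land in $\mathrm{Aut}(M_P,0)$, and then invoke Theorem~\ref{T1}(b). Your additional verification of the direct-sum structure (trivial intersection and commutativity) is a detail the paper leaves implicit but is entirely consistent with its argument.
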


\begin{proof}
Let $f\in \mathrm{Aut}(M_P)$ be arbitrary. 
Since the origin is of infinite type, so is $f(0,0)$.  
Because of the assumption $\mathrm{(ii)}$, 
we have $P_\infty(M_P)=\{(it,0)\colon t\in \mathbb R\}$. 
This tells us that $f(0,0)=(it_0,0)$ for some $t_0\in \mathbb R$. 
Then $T^1_{-t_0}\circ f\in \mathrm{Aut}(M_P,0)$. 
Thus, the proof easily follows from Theorem \ref{T1}.
\end{proof}

In the case that $P$ is positive on a punctured disk $\Delta_{\epsilon_0}^*$, 
$\mathfrak{aut}_0(M_P,0)$ is at most one-dimensional (see \cite{NCM}). 
Moreover, if $P$ is rotational, i.e. $P(z_2)\equiv P(|z_2|)$, 
then in \cite{Ninh2} we proved that $\mathrm{Aut}(M_P,0)=\mathrm{G_2}
(M_P,0)=\{(z_1,z_2)\mapsto(z_1,e^{it}z_2)\colon t\in \mathbb R\}$. 
Therefore, we only consider the case that $P$ is not \emph{rotationally symmetricable}, 
i.e., there is no conformal map $\varphi$ with $\varphi(0)=0$ 
such that $P\circ \varphi(z_2)\equiv P\circ \varphi(|z_2|)$, 
in which case we showed that 
$\mathfrak{aut}_0(M_P,0)=\{0\}$ provided that the connected component of $0$ 
in the zero set of $P$ is $\{0\}$ (cf. Theorem \ref{T3}). 
In addition, this assertion still holds if $P$, defined on a neighborhood $U$ of $0$ 
in $\mathbb C$, satisfies the condition $\mathrm{(I)}$ (cf. \cite{Ninh1}), that is,   
\begin{itemize}
\item[(I.1)]$ \displaystyle \limsup_{\tilde U \ni z\to 0} 
|\mathrm{Re}(b z^k \frac{P'(z)}{P(z)})| =+\infty$;
\item[(I.2)] $ \displaystyle \limsup_{\tilde U \ni z\to 0} | \frac{P'(z)}{P(z)}| =+\infty$
\end{itemize}
for all $k=1,2,\ldots$ and for all $b\in \mathbb C^*$, 
where $\tilde U :=\{z\in U: P(z)\ne 0\}$. 
Therefore, as an application of Theorem \ref{T1} we obtain the following corollaries.  
\begin{corollary}\label{co2}
Let $(M_P,0)$ be as in Theorem~\ref{T1}. Assume that 
\begin{itemize}
\item[(i)] $P$ is not rotationally symmetricable,
\item[(ii)] The connected component of $0$ in the zero set of $P$ is $\{0\}$, and 
\item[(iii)] The connected component of $0$ in $S_\infty(P)$ is $\{0\}$,
\end{itemize}
then 
$$
\mathrm{Aut}(M_P,0)=G_2(M_P,0).
$$
\end{corollary}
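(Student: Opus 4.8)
The plan is to obtain this corollary as an immediate consequence of Theorem~\ref{T3} and part~(b) of Theorem~\ref{T1}. The decisive point is to check that the three hypotheses force the isotropy algebra $\mathfrak{aut}_0(M_P,0)$ to be trivial; granting this, condition~(iii) is precisely hypothesis~(ii) of Theorem~\ref{T1}, so Theorem~\ref{T1}(b) yields $\mathrm{Aut}(M_P,0)=\mathrm{G_2}(M_P,0)$ and we are done.

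First I would note that, since $(M_P,0)$ is taken as in Theorem~\ref{T1}, the function $P$ vanishes to infinite order at $z_2=0$ (equivalently $0\in S_\infty(P)$); combined with hypothesis~(ii), that the connected component of $z_2=0$ in the zero set of $P$ is $\{0\}$, this shows that the two standing assumptions of Theorem~\ref{T3} are satisfied. Hence every holomorphic vector field $H$ vanishing at the origin and tangent to $(M_P,0)$ is either identically zero or, after a conformal change of coordinate $z_2\mapsto\varphi(z_2)$ with $\varphi(0)=0$, equal to $i\beta z_2\partial_{z_2}$ for some $\beta\in\mathbb{R}^*$, and in the latter case the defining function expressed in the new coordinate is rotational.

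Next I would rule out the second alternative. If it occurred, then $P\circ\varphi$ would be rotationally symmetric, i.e.\ $(P\circ\varphi)(z_2)\equiv(P\circ\varphi)(|z_2|)$, which is exactly the statement that $P$ is rotationally symmetricable; this contradicts hypothesis~(i). Consequently $\mathfrak{aut}_0(M_P,0)=\{0\}$, and applying Theorem~\ref{T1}(b) completes the proof.

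I do not anticipate any genuine obstacle, since all the analytic work has already been carried out in Theorem~\ref{T3}. The only subtlety worth spelling out is the bookkeeping identification between the phrase ``$M_P$ is rotationally symmetric after a change of variable in $z_2$'', as it appears in Theorem~\ref{T3}, and the definition of ``rotationally symmetricable'' used here --- namely, one must observe that the conformal map $\varphi$ normalizing $H$ is the very map witnessing rotational symmetrizability of $P$ --- together with the remark that the infinite-order vanishing of $P$ at $0$ is implicitly part of ``as in Theorem~\ref{T1}'', so that Theorem~\ref{T3} is indeed applicable.
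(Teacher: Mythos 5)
Your proposal is correct and follows exactly the route the paper intends: the paper derives this corollary by invoking Theorem~\ref{T3} (under hypothesis~(ii) and the infinite-order vanishing implicit in ``as in Theorem~\ref{T1}'') to conclude $\mathfrak{aut}_0(M_P,0)=\{0\}$ when $P$ is not rotationally symmetricable, and then applies Theorem~\ref{T1}(b). Your extra care in matching ``rotationally symmetric after a change of variable'' with ``rotationally symmetricable'' is exactly the identification the paper makes in the paragraph preceding the corollary.
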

\begin{corollary}\label{co3}
Let $(M_P,0)$ be as in Theorem~\ref{T1}. Assume that 
\begin{itemize}
\item[(i)] $P(z_2)\not \equiv 0$ on a neighborhood of $z_2= 0$,
\item[(ii)] $P$ satisfies the condition $\mathrm{(I)}$, and 
\item[(iii)] The connected component of $0$ in $S_\infty(P)$ is $\{0\}$,
\end{itemize}
then 
$$
\mathrm{Aut}(M_P,0)=G_2(M_P,0).
$$
\end{corollary}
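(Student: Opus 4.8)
The plan is to obtain this corollary as a direct consequence of Theorem~\ref{T1}(b), the only additional input being the vanishing of $\mathfrak{aut}_0(M_P,0)$ guaranteed by condition $\mathrm{(I)}$. First I would verify that the hypotheses of Theorem~\ref{T1} are in force: condition (i) of the corollary is verbatim hypothesis (i) of Theorem~\ref{T1}, and condition (iii), that the connected component of $0$ in $S_\infty(P)$ is $\{0\}$, is verbatim hypothesis (ii). Hence Theorem~\ref{T1} is available, and its part (a) yields the decomposition $\mathfrak{g}=\mathfrak{g}_{-1}\oplus\mathfrak{aut}_0(M_P,0)$. By part (b), the asserted equality $\mathrm{Aut}(M_P,0)=\mathrm{G_2}(M_P,0)$ follows as soon as one knows that $\mathfrak{aut}_0(M_P,0)$ is trivial. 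So everything reduces to proving that condition $\mathrm{(I)}$ forces $\mathfrak{aut}_0(M_P,0)=\{0\}$.

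To prove this triviality --- which is exactly the assertion recorded in the discussion preceding the corollary, with reference to \cite{Ninh1} --- I would take an arbitrary $H=h_1\partial_{z_1}+h_2\partial_{z_2}\in\mathfrak{aut}_0(M_P,0)$, so that $h_1(0)=h_2(0)=0$, and write out the tangency condition $\mathrm{Re}(H\rho)|_{M_P}=0$. With $\rho=\mathrm{Re}\,z_1+P(z_2)$ a short Wirtinger computation gives
$$
\mathrm{Re}\Big(\tfrac12 h_1(z_1,z_2)+h_2(z_1,z_2)\,\frac{\partial P}{\partial z_2}(z_2)\Big)=0\quad\text{on }M_P.
$$
I would then restrict to the parametrization $z_1=it-P(z_2)$, $t\in\mathbb R$, expand $h_1$ and $h_2$ as power series in $z_1$, and separate powers of $t$ together with the holomorphic and antiholomorphic parts. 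This converts the single real identity into a system of functional equations in $z_2,\bar z_2$ tying the Taylor coefficients of $h_1,h_2$ to $P$ and to its logarithmic derivative $P'/P$.

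The main obstacle is precisely the last step: showing that this system admits no nontrivial solution. Here the growth hypotheses of condition $\mathrm{(I)}$ enter decisively --- the unboundedness of $\big|\mathrm{Re}\big(b z^k P'(z)/P(z)\big)\big|$ for every $k$ and every $b\in\mathbb C^*$ (I.1), together with the unboundedness of $|P'(z)/P(z)|$ (I.2) as $z\to0$ along $\tilde U=\{P\ne0\}$, contradict the local boundedness forced on the (holomorphic) coefficient functions unless they all vanish, so that $H\equiv0$. The detailed estimates are those of \cite{Ninh1}. Once $\mathfrak{aut}_0(M_P,0)=\{0\}$ is established, Theorem~\ref{T1}(b) gives $\mathrm{Aut}(M_P,0)=\mathrm{G_2}(M_P,0)$, completing the proof.
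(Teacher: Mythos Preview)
Your proposal is correct and follows exactly the approach the paper intends: the paper does not give a separate proof of this corollary but states it as an immediate application of Theorem~\ref{T1}(b), using the fact (cited from \cite{Ninh1}) that condition $\mathrm{(I)}$ forces $\mathfrak{aut}_0(M_P,0)=\{0\}$. Your additional sketch of how the tangency identity combined with the growth conditions $\mathrm{(I.1)}$--$\mathrm{(I.2)}$ kills all nontrivial solutions is more detail than the paper itself provides, but it is in the right spirit and correctly identifies where the external input from \cite{Ninh1} is used.
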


\subsection{Proof of Theorem \ref{T11}}

\noindent 
{\bf (a)}  As a consequence of Theorem \ref{T111} in Appendix A.2, 
we see that $\mathfrak{aut}_0(M_P,0)=0$. 
Therefore, we shall prove that 
$\mathfrak{aut}(M_P,0)=\mathfrak{g}_{-1}\oplus \mathfrak{g}_0 $.
Indeed, let 
$H(z_1,z_2)=h_1(z_1,z_2)\partial_{z_1}+h_2(z_1,z_2)\partial_{z_2}\in \mathfrak{aut}(M_P,0)$ 
be arbitrary and 
$\{\phi_t\}_{t\in\mathbb R}\subset \mathrm{Aut}(M_P)$ be 
the one-parameter subgroup generated by $H$. 
Since $\phi_t$ is biholomorphic for every $t\in \mathbb R$, 
the set $\{\phi_t(0)\colon t\in\mathbb R\}$ is contained in $P_\infty(M_P)$. 
We remark that the connected component of $0$ in $P_\infty(M_P)$ is 
$\{(it_1,it_2)\colon t_1,t_2\in \mathbb R\}$. 
Therefore, we have $\phi_t(0,0)\subset \{(it_1,it_2)\colon t_1,t_2\in\mathbb R\}$. 
Consequently, we obtain $\mathrm{Re}~h_1(0,0)=0$ and $\mathrm{Re}~h_2(0,0)=0$. 
Hence, the holomorphic vector field $H-i\beta_1 \partial_{z_1}-i\beta_2 \partial_{z_2}$, 
where $\beta_j:=\mathrm{Im}~h_j(0,0)$ for $j=1,2$ belongs to $\mathfrak{aut}_0(M_P,0)$, 
which ends the proof of $\mathrm{(a)}$.    

\noindent 
{\bf (b)} By $\mathrm{(a)}$, we see that 
$\mathfrak{aut}(M_P,0)=\mathfrak{g}_{-1}\oplus \mathfrak{g}_{0}$, i.e., 
it is generated by $i\partial_{z_1}$ and $i\partial_{z_2}$. 
Denote by $\{T^j_t\}_{t\in\mathbb R}$ 
the one-parameter subgroups generated by $i\partial_{z_j}$ for $j=1,2$, i.e., 
$$
T^1_t(z_1,z_2)=(z_1+it,z_2),~T^2_t(z_1,z_2)=(z_1,z_2+it), \; t \in \mathbb R. 
$$
For any $f=(f_1,f_2)\in \mathrm{Aut}(M_P, 0)$, we define families 
$\{F^j_t\}_{t\in \mathbb R}$ 
of automorphisms by setting $F^j_t:=f\circ T^j_{-t}\circ f^{-1}$ ($j=1,2$). 
Then it follows that $\{F^j_t\}_{t\in \mathbb R},~j=1,2$, are one-parameter subgroups of 
$\mathrm{Aut}\big(M_P\big)$. 
Since $\mathfrak{aut}(M_P, 0)=\mathfrak{g}_{-1}\oplus \mathfrak{g}_{0}$, 
the holomorphic vector fields $H^j$, $j=1,2$, generated by 
$\{F^j_t\}_{t\in \mathbb R}~(j=1,2)$ belong to $\mathfrak{g}_{-1}\oplus \mathfrak{g}_{0}$. 
This means that there exist real numbers $\delta^j_1,\delta^j_2, j=1,2$, such that 
$H^j=i\delta_1^j\partial_{z_1}+i\delta_2^j\partial_{z_2}$ for $j=1,2$, which yield that
$$
F^j_t(z_1,z_2)=\big(z_1+i\delta^j_1 t,z_2+i\delta^j_2 t\big)
=T^1_{\delta^j_1 t}\circ T^2_{\delta^j_2 t},~j=1,2,~t\in\mathbb R. 
$$
This implies that
$$
f=T^1_{\delta^j_1 t}\circ T^2_{\delta^j_2 t}\circ f\circ T^j_t,
$$
which is equivalent to
\begin{equation}\label{eq2014}
\begin{split}
f_1(z_1,z_2)&=f_1(z_1+it,z_2)+i\delta^1_1t,\\
f_2(z_1,z_2)&=f_2(z_1+it,z_2)+i\delta^1_2t,\\
f_1(z_1,z_2)&=f_1(z_1,z_2+it)+i\delta^2_1t,\\
f_2(z_1,z_2)&=f_2(z_1,z_2+it)+i\delta^2_2t.
\end{split}
\end{equation}
It follows from (\ref{eq2014}) that
\begin{equation*}
\begin{split}
\frac{\partial}{\partial z_1}f_1(z_1,z_2)&=-\delta^1_1,\\
\frac{\partial}{\partial z_1}f_2(z_1,z_2)&=-\delta^1_2,\\
\frac{\partial}{\partial z_2}f_1(z_1,z_2)&=-\delta^2_1,\\
\frac{\partial}{\partial z_2}f_2(z_1,z_2)&=-\delta^2_2,
\end{split}
\end{equation*}
which tells us that 
$$
f(z_1,z_2)=(-\delta^1_1z_1-\delta^2_1 z_2,-\delta^1_2 z_1-\delta^2_2 z_2).
$$
Since $M_{P}$ is invariant under $f$, one has
\begin{equation}\label{ttte4}
\begin{split}
&\mathrm{Re}f_1\big(it-P(z_2),z_2\big)+P\Big(f_2\big(it-P(z_2),z_2\big)\Big)\\
&=\mathrm{Re}\Big(-\delta^1_1 \big(it-P(z_2)\big)-\delta^2_1 z_2\Big)
+P\Big(-\delta^1_2 \big(it-P(z_2)\big)-\delta^2_2 z_2\Big) \\
&=\delta^1_1 P(z_2)-\delta^2_1 \mathrm{Re}(z_2)
+P\Big(\delta^1_2P(z_2)-\delta^2_2 z_2\Big)=0
\end{split}
\end{equation}
for all $(z_2,t)\in \Delta_{\epsilon_0}\times (-\delta_0,\delta_0)$ 
for some $\epsilon_0,\delta_0>0$ small enough.

Since $\nu_0(P)=+\infty$, we have $\delta^2_1=0$. 
Therefore, putting $z_2=t\in (-\epsilon_0,\epsilon_0)$ in (\ref{ttte4}), we obtain the following equation
\begin{equation}\label{etq1}
P\Big(-\delta^2_2 t+\delta^1_2P(t)\Big)=-\delta^1_1 P(t)
\end{equation}
for all $t\in (-\epsilon_0,\epsilon_0)$. 
By the mean value theorem, for each $t\in (-\epsilon_0,\epsilon_0)$ 
there exists a number $\gamma(t)\in [0,1]$ such that
\begin{equation}\label{etq2}
P\Big(-\delta^2_2 t+\delta^1_2P(t)\Big)=P(-\delta^2_2 t)
+P'\Big(-\delta^2_2 t+\gamma(t)\delta^1_2P(t)\Big) \delta^1_2P(t).
\end{equation}
Because of the fact that the function $P'\Big(-\delta^2_2 t+\gamma(t)\delta^1_2P(t)\Big)$ 
vanishes to infinite order at $t=0$, by (\ref{etq1}) and (\ref{etq2}), one has
$$
P\big(-\delta^2_2 t\big)=\big(-\delta^1_1+o(1)\big)P(t), \; t \in (-\epsilon_0,\epsilon_0).
$$
Then it follows from the proof of Lemma \ref{lemma2} that it is not hard to see that 
$-\delta^1_1=-\delta^2_2=1$.

Now the equation (\ref{etq1}) becomes
$$
P\Big( t+\delta^1_2P(t)\Big)=P(t)
$$
for all $t\in (-\epsilon_0,\epsilon_0)$. By Lemma \ref{lemma-3}, this equation implies that $\delta^1_2=0$. Therefore, we conclude that $f=\mathrm{id}$, which finishes the proof of $\mathrm{(b)}$.

\noindent 
{\bf (c)} Denote by $T^1_t$ and $T^2_t$ the shifts to imaginary directions 
of the first and second components 
$$
T^1_t(z_1,z_2)=(z_1+it,z_2),~T^2_t(z_1,z_2)=(z_1,z_2+it), \; t \in \mathbb R. 
$$
Now let $f\in \mathrm{Aut}(M_P)$ be arbitrary. 
Then $f(0,0)$ is of infinite type. 
It follows from  $S_\infty(\tilde P)=\{0\}$ that we have 
$P_\infty(M_P)=\{(it,is)\colon t,s\in \mathbb R\}$.
Therefore we get $f(0,0)=(it_0,is_0)$ for some $t_0,s_0\in \mathbb R$ 
and we obtain $T^1_{-t_0}\circ T^2_{-s_0}\circ f\in \mathrm{Aut}(M_P,0)=\{\mathrm{id}\}$ 
by $\mathrm{(b)}$. The proof of $\mathrm{(c)}$ follows.
\hfill $\Box\;$

\section{Analysis of holomorphic tangent vector fields}
In this section, we study the determination of the defining function 
from holomorphic vector fields. Assume that an infinite type hypersurface $M_P$ defined by 
$\rho(z)=\mathrm{Re}~z_1+P(z_2)$ satisfying the conditions (i) and (ii) 
posed in Theorem~\ref{T3}. Theorem~{\ref{T3}} says that if there are non trivial holomorphic vector fields vanishing at the origin tangent to $M_P$, then the hypersurface $M_P$ is rotationally symmetric. 
The typical example of rotationally symmetric hypersurface is 
\begin{equation*}
M_P=\{(z_1, z_2) \in \mathbf{C}^2:\mathrm{Re}~z_1+\exp(-\dfrac{1}{|z_2|^{\alpha}})=0\},
\end{equation*}
where $\alpha >0$, as in Example~{\ref{ex2}} in \S~6. 

To prove Theorem~{\ref{T3}}, we need some lemmas. 
\begin{lemma}\label{Al3} 
Let $P:\Delta_{\epsilon_0}\to \mathbb R$ be a $\mathcal{C}^\infty$-smooth 
function satisfying that the connected component of $z=0$ 
in the zero set of $P$ is $\{0\}$ and that $P$ vanishes to infinite order at $z=0$. 
If $a, b$ are complex numbers and if $g_0, g_1, g_2$ are 
$\mathcal{C}^\infty$-smooth functions defined on $\Delta_{\epsilon_0}$ satisfying:
\begin{itemize}
\item[(A1)] $g_0(z) = O(|z|)$, $g_1(z) = O(|z|^\ell)$, and $g_2(z) = o(|z|^m)$, and
\item[(A2)] $\mathrm{Re} \Big[\big(az^m+g_2(z)\big) P^{n+1}(z)+b z^\ell\big(1+g_0(z)\big)P_{z}(z)
+g_1(z)P(z) \Big]=0$ for every $z \in \Delta_{\epsilon_0}$
\end{itemize}
for any nonnegative integers $\ell, m$ and $n$ except for the following two cases
\begin{itemize}
\item[(E1)] $\ell=1$ and $\text{Re }b = 0$, and
\item[(E2)] $m=0$ and $\text{Re } a = 0$
\end{itemize}
then $ab=0$.
\end{lemma}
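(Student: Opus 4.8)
The plan is to argue by contradiction: assume $a\neq 0$, $b\neq 0$, and that neither (E1) nor (E2) holds. I will produce either an open set on which $P\equiv 0$, or a nontrivial curve through $0$ lying in $\{P=0\}$ — each contradicting the standing hypothesis that the connected component of $0$ in the zero set of $P$ is $\{0\}$. Using that $P$ and $P^{n+1}$ are real-valued, (A2) rewrites as
$$\mathrm{Re}\big[bz^\ell\big(1+g_0(z)\big)P_z(z)\big]=-P(z)\,\Omega(z),\qquad \Omega:=P^{n}\,\mathrm{Re}[az^m+g_2]+\mathrm{Re}[g_1],$$
where $\Omega$ is $\mathcal C^\infty$; by (A1), $\Omega=O(|z|^{\ell})$ if $n\geq 1$, and $\Omega=\mathrm{Re}[az^m]+o(|z|^m)+O(|z|^{\ell})$ if $n=0$. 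Let $X$ be the real $\mathcal C^\infty$ vector field with $XP=\mathrm{Re}[\mu P_z]$ for real-valued $P$, where $\mu(z)=bz^\ell(1+g_0(z))$; since $g_0(0)=0$ and $b\neq 0$, $X$ vanishes at $0$ precisely to order $\ell$, and the identity above reads $XP=-\Omega P$ throughout $\Delta_{\epsilon_0}$. Along any integral curve $\Phi_t$ of $X$, the function $t\mapsto P(\Phi_t z)$ solves the linear ODE $\tfrac{d}{dt}P(\Phi_t z)=-\Omega(\Phi_t z)P(\Phi_t z)$; hence $\{P=0\}$ is $X$-invariant, and on every orbit $P$ vanishes either identically or nowhere. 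If $\ell=0$, then $X(0)=b\neq 0$, so the orbit through $0$ is a nontrivial $\mathcal C^\infty$ arc; since $P(0)=0$, invariance puts this arc into $\{P=0\}$, a contradiction. So henceforth $\ell\geq 1$ and $X(0)=0$.

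For $\ell\geq 1$ the phase portrait of $X$ near $0$ agrees, up to higher-order perturbations, with that of $\mathrm{Re}[bz^\ell\partial_z]$. For $\ell=1$ the origin is a hyperbolic focus once $\mathrm{Re}\,b\neq 0$, which "not (E1)" provides since $b\neq 0$; orbits accumulate at $0$ with $|\Phi_t z|\asymp e^{t\,\mathrm{Re}\,b/2}$. For $\ell\geq 2$ one has the Leau--Fatou configuration of $\ell-1$ attracting and $\ell-1$ repelling petals, and every orbit entering a petal tends to $0$ at the rate $|\Phi_t z|\asymp|t|^{-1/(\ell-1)}$. In either case, if an orbit accumulating at $0$ meets $\{P=0\}$, then by invariance the whole orbit lies in $\{P=0\}$, and its closure — connected and containing $0$ — contradicts the hypothesis. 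Otherwise $P\neq 0$ along all such orbits, and integrating $XP=-\Omega P$ gives
$$P(\Phi_t z)=P(z)\,\exp\!\Big(-\!\int_0^t\Omega(\Phi_s z)\,ds\Big).$$

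Since $P$ vanishes to infinite order at $0$ and $\Phi_t z\to 0$, the left-hand side is $o\big(|\Phi_t z|^N\big)$ for every $N$, so I would compare this against the growth of $\int_0^t\Omega(\Phi_s z)\,ds$ case by case. If $n\geq 1$ (so $\Omega\circ\Phi_s=O(|\Phi_s z|^{\ell})$) or $m\geq\ell$, the integral converges, forcing $|P(\Phi_t z)|$ to a nonzero limit — impossible. If $m=0$ (hence $n=0$, else the previous case applies), "not (E2)" gives $\mathrm{Re}\,a\neq 0$ and $\Omega\circ\Phi_s\to\mathrm{Re}\,a$, so along a suitable attracting or repelling petal $\int_0^t\Omega(\Phi_s z)\,ds\to-\infty$, whence $|P(\Phi_t z)|\to\infty$ — impossible, $P$ being bounded. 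If $m=\ell-1$ (so $\ell\geq 2$), then $\Omega\circ\Phi_s\asymp K/s$ for a constant $K$, so $|P(\Phi_t z)|$ decays, or fails to decay, only like a fixed power of $t$ — impossible against $o\big(|\Phi_t z|^N\big)=o\big(t^{-N/(\ell-1)}\big)$ for all $N$. Finally, if $1\leq m\leq\ell-2$ (so $\ell\geq 3$), a short computation with the $\ell-1$ petal directions shows one cannot simultaneously have $\mathrm{Re}[a(\Phi_s z)^m]$ of the "good" sign on every attracting petal and on every repelling petal; along an offending petal the integral either converges or diverges so that $|P(\Phi_t z)|\to\infty$ — impossible in both cases. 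In every case $P(z)=0$ for $z$ in an open petal, so $\{P=0\}$ has nonempty interior accumulating at $0$, contradicting the hypothesis. Therefore $b=0$, and in all cases $ab=0$.

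The step I expect to be the main obstacle is this last one: the petal-by-petal analysis of $\int_0^t\Omega(\Phi_s z)\,ds$ for $\ell\geq 2$, and especially the borderline $m=\ell-1$, where the contradiction depends delicately on matching the exact power decay of $|P(\Phi_t z)|$ against the infinite-order vanishing of $P$. A secondary technical point is justifying the hyperbolic and parabolic orbit asymptotics for the merely $\mathcal C^\infty$ field $X$: as $g_0$ is only smooth, one first replaces $bz^\ell(1+g_0)$ by $bz^\ell$ and absorbs the error term $\mathrm{Re}[bz^\ell g_0\,P_z/P]$, which requires a bound on $P_z/P$ along the orbits. Everything in the first two paragraphs is routine once the reality of $P$ is exploited.
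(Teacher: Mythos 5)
You should know that the paper does not prove Lemma~\ref{Al3} directly: it invokes \cite[Lemma 3]{Kim-Ninh} for the case where $P$ is positive on $\Delta^*_{\epsilon_0}$, and supplies only Lemma~\ref{newlemma}, which shows that $P$ cannot acquire a zero along an integral curve of $\dot\gamma=b\gamma^\ell(1+g_0(\gamma))$ tending to $0$. Your proposal reconstructs that outsourced argument from scratch and follows the same route. Your structural observation --- that (A2) reads $XP=-\Omega P$ along orbits of $X=\mathrm{Re}\big[bz^\ell(1+g_0)\partial_z\big]$, so $\{P=0\}$ is a union of orbits and the connected-component hypothesis forces $P\ne 0$ on every orbit accumulating at $0$ --- is precisely the content of the paper's Lemma~\ref{newlemma}, there phrased as boundedness of $\frac{d}{dt}\log|P\circ\gamma|$; and the formula $P(\Phi_t z)=P(z)\exp\big(-\int_0^t\Omega\big)$ compared against $|P(\Phi_t z)|=o(|\Phi_t z|^N)$ is the engine of the cited proof. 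The cases $\ell=0$, $n\ge 1$ or $m\ge\ell$, $m=0$, and $m=\ell-1$ are all handled correctly in your sketch (in the last one, note that the error terms only contribute $o(\log|t|)$ to $\int_0^t\Omega$, so $|P(\Phi_t z)|\ge c\,|t|^{-K-1}$ eventually, which is what actually contradicts infinite-order vanishing).

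The genuine gap is the case $1\le m\le\ell-2$, where you assert that ``a short computation'' shows the sign of $\mathrm{Re}[a(\Phi_s z)^m]$ cannot be favorable on every petal. As stated this is not a computation but the crux, and your one-sentence version would not survive the degenerate configurations. The argument that works is: the attracting separatrix directions $\theta_j$ satisfy $(\ell-1)\theta_j=\pi-\arg b+2\pi j$, so the numbers $e^{im\theta_j}$ are a rotate of the $d$-th roots of unity ($d=(\ell-1)/\gcd(m,\ell-1)\ge 2$ because $(\ell-1)\nmid m$), each repeated $(\ell-1)/d$ times; hence $\sum_j \mathrm{Re}\big[a e^{im\theta_j}\big]=0$, so either some attracting petal has strictly negative coefficient (then $\int_0^{+\infty}\Omega=-\infty$ and $|P|\to\infty$, a contradiction), or \emph{all} these coefficients vanish. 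In the latter case you must pass to the repelling family $\theta_j'=\theta_j-\pi/(\ell-1)$: not all of $\mathrm{Re}[ae^{im\theta_j'}]$ can also vanish (that would force $e^{im\pi/(\ell-1)}=\pm 1$, i.e. $(\ell-1)\mid m$), their sum is again zero, so some repelling petal has strictly positive coefficient and $\int_0^{-\infty}\Omega=-\infty$ there. Without this ``switch families when all coefficients are purely imaginary'' step, a petal whose leading coefficient vanishes leaves $\int_0^t\Omega$ governed by error terms, and a stretched-exponential decay $e^{-c|t|^{1-m/(\ell-1)}}$ is perfectly compatible with infinite-order vanishing, so no contradiction arises. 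Finally, your closing remark that controlling the perturbed orbits ``requires a bound on $P_z/P$ along the orbits'' is off target: the ODE for $P$ along orbits is exact, and what needs justification is only the standard statement that $\dot z=bz^\ell(1+O(|z|))$ retains the hyperbolic (for $\ell=1$, $\mathrm{Re}\,b\neq 0$) or petal (for $\ell\ge 2$) asymptotics of $\dot z=bz^\ell$.
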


The proof of Lemma \ref{Al3} for the case that $P$ is positive on 
$\Delta_{\epsilon_0}^*$  is given in \cite[Lemma $3$]{Kim-Ninh} 
(see also \cite[Lemma $1$]{NCM}). Furthermore, Lemma \ref{Al3} 
follows easily from \cite[Lemma $3$]{Kim-Ninh} and the following lemma.
\begin{lemma}\label{newlemma} 
Let $P,g_0,g_1,g_2, a,b$ be as in Lemma \ref{Al3}. 
Suppose that $\gamma: [t_0,t_\infty)\to \Delta^*_{\epsilon_0}~(t_0\in \mathbb R)$, 
where either $t_\infty\in \mathbb R$  or $t_\infty=+\infty$, 
is a solution of the initial-value problem 
$$ 
\frac{d\gamma(t)}{dt}=b \gamma^\ell(t)\big(1+g_0(\gamma(t))\big),~\gamma(t_0)=z_0,
$$
where $z_0\in \Delta^*_{\epsilon_0}$ with 
$P(z_0)\ne 0$, such that $\lim_{t\uparrow t_\infty}\gamma(t)=0$. 
Then $P(\gamma(t))\ne 0$ for every $t\in (t_0,t_\infty)$.  
\end{lemma}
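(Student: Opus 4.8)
\textbf{Proof proposal for Lemma \ref{newlemma}.}

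The plan is to argue by contradiction, exploiting the fact that $P$ vanishes to infinite order at $0$ together with the dynamics of the ODE $\dot\gamma = b\gamma^\ell(1+g_0(\gamma))$. Suppose there were a point $t_1\in(t_0,t_\infty)$ with $P(\gamma(t_1))=0$; since the connected component of $0$ in the zero set of $P$ is $\{0\}$, and $\gamma(t_1)\ne 0$, this is already an isolated zero situation, so first I would set $t_1 := \inf\{t\in(t_0,t_\infty)\colon P(\gamma(t))=0\}$ and note $t_1 > t_0$ because $P(z_0)\ne 0$ and $P$ is continuous. By continuity $P(\gamma(t_1))=0$ and $P(\gamma(t))\ne 0$ for $t\in[t_0,t_1)$. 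The goal is then to derive a contradiction from the behaviour of $P\circ\gamma$ as $t\uparrow t_1$.

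The key step is to track the growth of $\log|P(\gamma(t))|$ along the flow. Along the solution one computes
\begin{equation*}
\frac{d}{dt}P(\gamma(t)) = 2\,\mathrm{Re}\!\left[P_z(\gamma(t))\,\dot\gamma(t)\right]
= 2\,\mathrm{Re}\!\left[b\,\gamma^\ell(t)\big(1+g_0(\gamma(t))\big)P_z(\gamma(t))\right].
\end{equation*}
Hypothesis (A2), with the roles set up so that the term $b z^\ell(1+g_0(z))P_z(z)$ appears, lets me replace $\mathrm{Re}[b z^\ell(1+g_0)P_z]$ by $-\mathrm{Re}[(az^m+g_2)P^{n+1}] - \mathrm{Re}[g_1 P]$, both of which are $O(|z|^{\min(m,\ell)})\cdot|P|$-type quantities (using (A1): $g_1 = O(|z|^\ell)$ and $az^m+g_2 = O(|z|^m)$, while $P^{n+1} = |P|^n\cdot P$). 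Hence there is a constant $C>0$ and $\epsilon_1>0$ with
\begin{equation*}
\left|\frac{d}{dt}P(\gamma(t))\right| \le C\,|P(\gamma(t))| \quad \text{whenever } |\gamma(t)|<\epsilon_1 .
\end{equation*}
Since $\lim_{t\uparrow t_\infty}\gamma(t)=0$ we have $|\gamma(t)|<\epsilon_1$ for $t$ near $t_\infty$; but I need this estimate near $t_1$, so I must instead observe that $t_1$ cannot be an interior accumulation of the vanishing set — more precisely, on $[t_0,t_1)$ the function $u(t):=\log|P(\gamma(t))|$ satisfies $|u'(t)|\le C$ once $|\gamma(t)|$ is small, i.e. $u$ is Lipschitz there, so $u(t)$ stays bounded below as $t\uparrow t_1$, contradicting $P(\gamma(t_1))=0$ (which forces $u(t)\to-\infty$). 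To handle the possibility that $\gamma$ does not enter $\{|z|<\epsilon_1\}$ before $t_1$, I note $|\dot\gamma|\le C'|\gamma|^\ell$ near $0$ forces finite-time behaviour that can be arranged by shrinking $\epsilon_0$; alternatively one applies the bound on the sub-interval $[t_1-\eta,t_1)$ after first showing $\gamma(t_1)$ must be small — but in fact $\gamma(t_1)$ need not be small, so the cleanest route is: restrict attention to $t$ close enough to $t_\infty$ that $|\gamma(t)|<\epsilon_1$, and show $P(\gamma(t))\ne 0$ there first, then propagate backwards.

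The main obstacle I anticipate is precisely this localization issue: the differential inequality $|u'|\le C$ is only available where $|\gamma(t)|$ is small, whereas a priori zeros of $P\circ\gamma$ could occur while $\gamma$ is still at moderate distance from $0$. The resolution is to use the connectedness hypothesis (the zero set of $P$ near $0$ is just $\{0\}$) to guarantee that $P(z)\ne 0$ for $0<|z|<\epsilon_0$ after shrinking $\epsilon_0$ — wait, that is not what (i) says; (i) says the \emph{connected component} of $0$ is $\{0\}$, so $P$ may still vanish at isolated points or on components not containing $0$ arbitrarily close to $0$. Thus the honest argument must be: reparametrize so that the estimate $|u'(t)|\le C$ holds on a final interval $(t_\infty-\delta, t_\infty)$ where $|\gamma|<\epsilon_1$; on that interval $u$ is Lipschitz hence bounded, so $P(\gamma(t))\ne 0$ there; then let $t_1$ be the supremum of times $\le t_\infty - \delta$ at which $P\circ\gamma$ vanishes and run the same Lipschitz/boundedness argument on a left-neighborhood of $t_1$ (where again $|\gamma|$ can be made small by further shrinking, since $\gamma$ is continuous and we may take $\epsilon_0$ small from the start so that the \emph{entire} trajectory lies in $\{|z|<\epsilon_1\}$ — which is legitimate because $z_0$ is at our disposal, or because the trajectory emanating from near $0$ under a vector field vanishing at $0$ stays near $0$). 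With the trajectory confined to a small disc throughout, $|u'|\le C$ globally on $(t_0,t_\infty)$, so $u$ cannot reach $-\infty$, giving $P(\gamma(t))\ne 0$ for all $t\in(t_0,t_\infty)$ and completing the proof.
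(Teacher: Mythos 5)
Your core argument is exactly the paper's: assume a first zero of $P\circ\gamma$ at some $t_1\in(t_0,t_\infty)$, set $u(t)=\tfrac12\log|P(\gamma(t))|$ on $(t_0,t_1)$, use (A2) to rewrite
$$
u'(t)=-P^n(\gamma(t))\,\mathrm{Re}\big[a\gamma^m(t)+g_2(\gamma(t))\big]-\mathrm{Re}\big[g_1(\gamma(t))\big],
$$
conclude $u'$ is bounded, hence $u$ is bounded on the finite interval $(t_0,t_1)$, contradicting $u(t)\to-\infty$ as $t\uparrow t_1$. That is precisely the proof in the paper, and it is correct.

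Where you go astray is the long detour about ``localization.'' The bound $|u'(t)|\le C$ does \emph{not} require $|\gamma(t)|$ to be small: after substituting (A2), $u'(t)$ is a continuous function of $\gamma(t)$ (no division by $P$ survives, since every remaining term carries a factor of $P$), and $\gamma([t_0,t_1])$ is automatically a compact subset of $\Delta_{\epsilon_0}$ on which $P$, $g_1$, $g_2$ are bounded --- the conditions in (A1) describe behaviour at $0$, but these functions are $\mathcal{C}^\infty$ on all of $\Delta_{\epsilon_0}$ and hence bounded on compacta. So the ``obstacle'' you anticipate does not exist, and the patches you propose to deal with it are not legitimate as stated: $z_0$ is given in the hypothesis, not at your disposal; the claim that a trajectory of this vector field starting near $0$ stays near $0$ is neither justified nor available (nor needed, since the hypothesis already confines $\gamma$ to $\Delta^*_{\epsilon_0}$); and shrinking $\epsilon_0$ changes the statement being proved. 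If you delete the entire localization discussion and simply observe that the compactness of $\gamma([t_0,t_1])$ gives the uniform bound on $u'$, your proof coincides with the paper's and is complete.
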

\begin{proof}
To obtain a contradiction, we suppose that $P$ has a zero on $\gamma$. 
Then since the connected component of $z=0$ in the zero set of $P$ is $\{0\}$, 
without loss of generality we may assume that there exists a 
$t_1\in (t_0,t_\infty)$ such that  $P(\gamma(t))\ne 0$ for all $t\in (t_0,t_1)$ 
and $P(\gamma(t_1))=0$. Denote $u(t):=\frac{1}{2}\log |P(\gamma(t))|$ 
for $t_0<t<t_1$. It follows from (A2) that  
\begin{equation*}
u'(t)=-P^n(\gamma(t))\Big(\text{Re} \big(a \gamma^m(t) 
+o(|\gamma(t)|^m)\big)\Big)+O(|\gamma(t)|^{\ell})
\end{equation*}
for all $t_0<t<t_1$. This means that $u'(t)$ is bounded on $(t_0,t_1)$. 
Therefore, $u(t)$ is also bounded on $(t_0,t_1)$, which contradicts the fact that 
$u(t)\to -\infty$ as $t\uparrow t_1$. Hence, our lemma is proved.
\end{proof}

Following the proof of Lemma \ref{Al3} (see also \cite[Lemma $1$]{NCM}), 
we have the following corollary.
\begin{corollary}\label{cor3} 
Let $P:\Delta_{\epsilon_0}\to \mathbb R$ be a $\mathcal{C}^\infty$-smooth 
function satisfying that the connected component of $z=0$ in the zero set of 
$P$ is $\{0\}$ and that $P$ vanishes to infinite order at $z=0$. 
If $b$ is a complex number and if $g$ is a $\mathcal{C}^\infty$-smooth 
function defined on $\Delta_{\epsilon_0}$ satisfying:
\begin{itemize}
\item[(B1)] $g(z) = O(|z|^{k+1})$, and
\item[(B2)] $\text{Re} \Big[\big(b z^k+g(z)\big)P_{z}(z)\Big]= 0$ for every 
$z \in \Delta_{\epsilon_0}$
\end{itemize}
for some nonnegative integer $k$, except the case $k= 1$ and 
$\text{Re} (b)=0$, then $b=0$.
\end{corollary}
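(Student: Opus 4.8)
The approach is to follow the proof of Lemma~\ref{Al3}. The hypotheses of Corollary~\ref{cor3} are those of Lemma~\ref{Al3} with $a=0$ and $g_1\equiv g_2\equiv 0$, and for that choice Lemma~\ref{Al3} itself yields only the vacuous statement $ab=0$; so one must rerun its proof, the gain being that, with the $P^{n+1}$- and $P$-terms gone, $P$ becomes \emph{constant} along the relevant integral curves, and this is precisely what forces $b=0$. So suppose, to get a contradiction, that $b\neq 0$ and that we are not in the excepted case $k=1$, $\mathrm{Re}\,b=0$, and consider the initial value problem
\[
\frac{d\gamma}{dt}=b\,\gamma^{k}+g(\gamma),\qquad \gamma(t_0)=z_0,
\]
whose right-hand side is $\mathcal{C}^\infty$-smooth and, near the origin, is a higher-order perturbation of $b z^{k}\partial_{z}$. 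It therefore admits integral curves tending to $0$: for $k=0$ the field does not vanish at $0$, so the orbit through $0$ restricted to a one-sided interval is such a curve; for $k\geq 2$ the model $\dot w=bw^{k}$ has parabolic sectors filled with orbits approaching $0$, and these survive the perturbation $g=O(|z|^{k+1})=o(|z|^{k})$; and for $k=1$ with $\mathrm{Re}\,b\neq 0$ the rest point at $0$ is hyperbolic, so it again has orbits converging to $0$ in forward or in backward time. It is exactly when $k=1$ and $\mathrm{Re}\,b=0$ that this breaks down, the model flow then consisting of circuits; that is why the statement excludes that case.

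Next one must choose $z_0$ with $P(z_0)\neq 0$. The orbits just produced form a connected set $\Omega$ with $0\in\overline\Omega$. If $P$ vanished identically on $\Omega$, then $\overline\Omega\subseteq\{P=0\}$ would be a connected subset of the zero set of $P$ containing $0$ and strictly larger than $\{0\}$, contradicting hypothesis~(i). Hence we may pick $z_0\in\Omega$ with $P(z_0)\neq 0$, and the corresponding orbit $\gamma\colon[t_0,t_\infty)\to\Delta^{*}_{\epsilon_0}$ satisfies $\gamma(t)\to 0$ as $t\uparrow t_\infty$; by Lemma~\ref{newlemma} we then have $P(\gamma(t))\neq 0$ for every $t\in(t_0,t_\infty)$.

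To finish I would differentiate $P$ along $\gamma$ and use (B2):
\[
\frac{d}{dt}\,P(\gamma(t))=2\,\mathrm{Re}\Big[\big(b\,\gamma(t)^{k}+g(\gamma(t))\big)\,P_{z}(\gamma(t))\Big]=0,
\]
so $P(\gamma(t))\equiv P(z_0)\neq 0$ on $(t_0,t_\infty)$. Since $\gamma(t)\to 0$ and $P$ is continuous, this gives $P(z_0)=P(0)=0$, a contradiction. Therefore $b=0$.

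I expect the real work to be the ODE analysis in the first paragraph: exhibiting, for each admissible $(k,b)$, a bona fide solution of the perturbed equation that tends to the origin, and verifying that the perturbation $g=O(|z|^{k+1})$ does not destroy the parabolic or hyperbolic sector structure of $b z^{k}\partial_{z}$, together with the bookkeeping that isolates $k=1$, $\mathrm{Re}\,b=0$ as the only genuine exception. The remainder, namely the use of hypothesis~(i) to keep $z_0$ off the zero set of $P$, the appeal to Lemma~\ref{newlemma}, and the constancy argument, proceeds just as in \cite[Lemma~3]{Kim-Ninh} and \cite[Lemma~1]{NCM}.
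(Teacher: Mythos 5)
Your proposal is correct and takes essentially the route the paper intends: the paper offers no separate proof of Corollary~\ref{cor3}, saying only that it follows the proof of Lemma~\ref{Al3} (i.e., \cite[Lemma 3]{Kim-Ninh} and \cite[Lemma 1]{NCM}), which is exactly the rerun you describe --- integral curves of $bz^k+g$ tending to the origin, hypothesis on the zero set of $P$ to place the initial point where $P\ne 0$, and the observation that (B2) makes $P$ constant along those curves, forcing the contradiction $P(z_0)=P(0)=0$. The only quibble is that your appeal to Lemma~\ref{newlemma} is redundant here, since constancy of $P$ along the orbit already prevents it from vanishing; this is harmless.
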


Now we are ready to prove Theorem~{\ref{T3}}. 
\begin{proof}[Proof of Theorem~{\ref{T3}}]
The CR hypersurface germ $(M_P,0)$ at the origin in $\CC^2$ 
under consideration is defined by the equation 
$$
\rho (z_1, z_2):= \mathrm{Re}~z_1 + P(z_2) = 0,
$$
where $P$ is $\mathcal{C}^\infty$-smooth functions satisfying 
the two conditions of this theorem.   
Recall that $P$ vanishes to infinite order at $z_2=0$ in particular.

Then we consider a holomorphic vector field 
$H=h_1(z_1,z_2)\partial_{z_1}+h_2(z_1,z_2)\partial_{z_2}$ 
defined on a neighborhood of the origin. We only consider $H$ 
that is tangent to $M_P$. 
This means that they satisfy the identity
\begin{equation}\label{eq221t}
(\mathrm{Re}~ H) \rho(z)=0,\; \forall z \in M_P.
\end{equation}

Expand $h_1$ and $h_2$ into the Taylor series at the origin 
$$
h_1(z_1,z_2)=\sum\limits_{j,k=0}^\infty a_{jk} z_1^j z_2^k=\sum_{j=0}^\infty a_j(z_2)z_1^j;
h_2(z_1,z_2)=\sum\limits_{j,k=0}^\infty b_{jk} z_1^jz_2^k=\sum_{j=0}^\infty b_j(z_2)z_1^j,
$$
where $a_{jk}, b_{jk}\in \mathbb C$ and $a_j,b_j$ are holomorphic functions 
for every $j\in \mathbb N$. We note that $a_{00}=b_{00}=0$ since $h_1(0,0)=h_2(0,0)=0$.

By a simple computation, we have 
\begin{equation*}
\begin{split}
\rho_{z_1}(z_1,z_2)= \frac{1}{2},~\rho_{z_2}(z_1,z_2)= P_{z_2}(z_2),
\end{split}
\end{equation*}
and the equation (\ref{eq221t}) can thus be re-written as
\begin{equation}\label{eq222t}
\begin{split}
&\mathrm{Re} \Big[\frac{1}{2}h_1(z_1,z_2) +P_{z_2}(z_2)h_2(z_1,z_2)\Big ]=0
\end{split}
\end{equation}
for all $(z_1,z_2)\in M_P$.
Since the point $(it-P(z_2), z_2)$ is in $M_P$ with $t$ small enough, 
the above equation again admits a new form
\begin{equation}\label{eq223t}
\mathrm{Re}\Big[ \frac{1}{2}\sum_{j,k=0}^\infty a_{jk}\big(it-P(z_2)\big)^j z_2^k
+P_{z_2}(z_2) \sum_{m,n=0}^\infty b_{mn} \big(it-P(z_2)\big)^m z_2^n\Big]=0
\end{equation}
for all $z_2\in \mathbb C$ and for all $t\in\mathbb R$ 
with $|z_2|<\epsilon_0$ and $|t|<\delta_0$, where $\epsilon_0>0$ and $\delta_0>0$ 
are small enough. 
The goal is to show that $H\equiv 0$. 
Striving for a contradiction, we suppose that $H\not\equiv 0$. 
Since $P_{z_2}(z_2)$ vanishes to infinite order at $0$, 
we notice that if $h_2\equiv 0$, then (\ref{eq222t}) shows that $h_1\equiv 0$. 
So, we must have  $h_2\not\equiv 0$.

We now divide the argument into two cases as follows.

\noindent
{\bf Case 1.} {\boldmath $h_1\not \equiv 0$.} 
In this case let us denote by $j_0$ the smallest integer such that $a_{j_0 k}\ne 0$ 
for some integer $k$. 
Then let $k_0$ be the smallest integer such that $a_{j_0 k_0}\ne 0$. 
Similarly, let $m_0$ be the smallest integer such that $b_{m_0 n}\ne 0$ for some integer $n$. 
Then denote by $n_0$ the smallest integer such that $b_{m_0 n_0}\ne 0$. 
We see that $j_0\geq 1$ if $k_0=0$, and $m_0\geq 1$ if $n_0=0$. 
Since $P(z_2)=o(|z_2|^j)$ for any $j\in \mathbb N$, inserting $t=\alpha P(z_2)$ 
into (\ref{eq223t}), where $\alpha\in \mathbb R$ will be chosen later, one has
\begin{equation}\label{eq224t}
\begin{split}
\mathrm{Re} \Big[&\frac{1}{2} a_{j_0k_0}(i\alpha -1)^{j_0}(P(z_2))^{j_0}
\big(z_2^{k_0}+o(|z_2|^{k_0})\big) \\
&+ b_{m_0n_0}(i\alpha -1)^{m_0}\big(z_2^{n_0}+o(|z_2|^{n_0})\big) 
(P(z_2))^{m_0}P_{z_2}(z_2)  \Big ]=0
\end{split}
\end{equation}
for all $z_2\in \Delta_{\epsilon_0}$. 
We note that in the case $k_0=0$ and $\mathrm{Re}(a_{j_0 0})=0$, 
$\alpha$ is chosen in such a way that $\mathrm{Re}\big( (i\alpha-1)^{j_0}a_{j_0 0}\big)\ne 0$. 
Then (\ref{eq224t}) yields that $j_0>m_0$ by virtue of the fact 
that $P_{z_2}(z_2)$ and $P(z_2)$ vanish to infinite order at $z_2=0$. 

We now consider two subcases as follows:
\medskip

\noindent
{\bf Subcase 1.1.} {\boldmath $m_0\geq 1$.}
If $n_0=1$, then the number $\alpha$ can also be chosen such that
$\text{Re }(b_{m_01}(i\alpha-1)^{m_0})\ne 0$. Therefore, $(\ref{eq224t})$ 
contradicts Lemma \ref{Al3}. Hence, we must have $m_0=0$.

\smallskip
\noindent
{\bf Subcase 1.2.} {\boldmath $m_0=0$.}
In addition to this condition, if $n_0>1$, or if $n_0=1$ and $\text{Re}(b_{01})\ne 0$,
then (\ref{eq224t}) contradicts Lemma \ref{Al3}. Therefore, we may assume that $n_0=1$ 
and $\mathrm{Re}(b_{01})=0$. By a change of variable in $z_2$ 
as in \cite[Lemma $1$]{Kim-Ninh}, we may assume that $b_0(z_2)\equiv i z_2$. 

Next, we shall prove that $b_m\equiv 0$ for every $m \in \mathbb N^*$. 
Indeed, suppose otherwise. Then let $m_1>0$ be the smallest integer 
such that $b_{m_1}\not \equiv  0$. Thus it can be written as follows:
$$ 
b_{m_1}(z_2)=b_{m_1 n_1} z_2^{n_1}+o(z_2^{n_1})
$$
where $n_1=\nu_0(b_{m_1})$ and $b_{m_1 n_1}\in \mathbb C^*$. 
Take a derivative by $t$ at $t=\alpha P(z_2)$ of both sides 
of the equation (\ref{eq223t}) and notice that $\nu_0(P)=+\infty$. 
One obtains that
\begin{equation}\label{eq?6}
\begin{split}
& \mathrm{Re}\Big[im_1\big(\alpha i-1\big)^{m_1-1}\big(P(z_2)\big)^{m_1-1}
\big(b_{m_1 n_1} z_2^{n_1}+o(|z_2|^{n_1})\big) P_{z_2}(z_2)\\
&+ j_1\big( a_{j_1 k_1}z_2^{k_1}+o(|z_2|^{k_1})\big) 
\big(\alpha i-1\big)^{j_1-1}\big(P(z_2)\big)^{j_1-1}\Big]=0
\end{split}
\end{equation}
for all $z_2\in  \Delta_{\epsilon_0}$, where $j_1,n_1\in\mathbb N$ and 
$a_{j_1 k_1}\in \mathbb C$. 

Following the argument as above, by Lemma \ref{Al3} and Corollary \ref{cor3},  
we conclude that
$m_1=n_1=1$ and $b_{1}(z_2)\equiv -\beta_1 z_2\big(1+O(z_2)\big)$ for some 
$\beta_1\in \mathbb R^*$. We claim that $b_1(z_2)\equiv -\beta_1 z_2$. 
Otherwise, the equation (\ref{eq?6}) implies that
\begin{equation}\label{eq?66}
\mathrm{Re}\big(iz_2P_{z_2}(z_2)\big)\equiv \mathrm{Re}\Big[a z^\ell\big(1+O(|z_2|)\big)P_{z_2}(z_2)\Big]+O(P(z_2)) 
\end{equation}
on $\Delta_{\epsilon_0}$ for some $a\in \mathbb C^*$ and $\ell \geq 2$. On the other hand, 
since $\nu_0(P)=+\infty$, inserting $t=0$ into (\ref{eq223t}) one has 
\begin{equation}\label{eq?666}
\mathrm{Re}\Big[ iz_2\Big( 1 -i\beta_1 \big(1+O(|z_2|)\big)P(z_2)\Big)P_{z_2}(z_2)+\big(a_{10}+o(1)\big) P(z_2)\Big]\equiv 0
\end{equation}
on $\Delta_{\epsilon_0}$. 
Therefore, subtracting (\ref{eq?66}) from (\ref{eq?666}) yields
\begin{equation}\label{neq?6}
\mathrm{Re}\Big[ i a z^{\ell}_2\big(1+O(|z_2|)\big) P_{z_2}(z_2)+\big(a_{10}+o(1)\big) P(z_2)\Big]\equiv 0
\end{equation}
on $\Delta_{\epsilon_0}$, which is impossible by Lemma \ref{Al3}. 
Hence, $b_1(z_2)\equiv -\beta_1 z_2$. 

Using the same argument as above, we obtain that 
$b_m(z_2)=\beta_m i^{m+1}z_2$ for every $m\in\mathbb N^*$, 
where $\beta_m\in \mathbb R^*$ for every $m\in\mathbb N^*$. 

Putting $t=\alpha P(z_2)$ in (\ref{eq223t}), one has 
\begin{equation}\label{eqt1}
\begin{split}
\mathrm{Re}\Big[& iz_2\Big( 1 +i\beta_1(i\alpha -1) P(z_2)
+\cdots +i^{m} \beta_m(i\alpha-1)^m P^m(z_2)+\cdots \Big)P_{z_2}(z_2)\\
&+\big(a_{10}+o(1)\big)P(z_2)\Big]\equiv 0
\end{split}
\end{equation}
on $\Delta_{\epsilon_0}$. 
On the other hand, taking a derivative both sides of (\ref{eq223t}) by $t$ 
at $t=\alpha P(z_2)$ , 
one also has
\begin{equation*}\label{eqt2}
\begin{split}
\mathrm{Re}\Big[& iz_2\Big( i^2\beta_1 +i^3 2\beta_2(i\alpha -1) P(z_2)
+\cdots +i^{m+2} m\beta_m(i\alpha -1)^{m-1} P^m(z_2)+\cdots \Big)P_{z_2}(z_2)\\
&+\frac{1}{2} \sum_{j=1}^\infty\sum_{k=0}^\infty j a_{jk}
\big(i\alpha-1\big)^{j-1}P^{j-1}(z_2) z_2^k \Big]\equiv 0,
\end{split}
\end{equation*}
or equivalently 
\begin{equation}\label{eqt2}
\begin{split}
\mathrm{Re}\Big[ &iz_2\Big( 1 +i 2\frac{\beta_2}{\beta_1}(i\alpha -1) 
P(z_2)+\cdots +i^{m} m\frac{\beta_m}{\beta_1}(\alpha i-1)^{m-1} 
P^m(z_2)+\cdots \Big)P_{z_2}(z_2)\\
&-\frac{1}{2\beta_1} \sum_{j=1}^\infty\sum_{k=0}^\infty j a_{jk}
\big(i\alpha-1\big)^{j-1}P^{j-1}(z_2) z_2^k \Big]\equiv 0
\end{split}
\end{equation}
on $\Delta_{\epsilon_0}$.

Now it follows from (\ref{eqt1}) and (\ref{eqt2}) that 
$$
2\beta_2/\beta_1=\beta_1,3\beta_3/\beta_1=\beta_2,\ldots,
m\beta_m/\beta_1=\beta_{m-1},\ldots,
$$
for otherwise, subtracting (\ref{eqt1}) from (\ref{eqt2}) 
one gets an equation depending on $\alpha$ which contradicts Lemma \ref{Al3} 
for some $\alpha\in \mathbb R$. 
Therefore, $\beta_m=\frac{(\beta_1)^m}{m!}$ for all $m\in \mathbb N^*$, and hence
$$
h_2(z_1,z_2)=iz_2 \Big(1+i\beta_1 z_1+i^2\frac{\beta_1^2}{2!}z_1^2+\cdots+i^m
\frac{\beta_1^m}{m!}z_1^m+\ldots\Big)=iz_2e^{i\beta_1 z_1}
$$
for all $z_2\in \Delta_{\epsilon_0}$. Moreover, the equation (\ref{eq223t}) becomes
\begin{equation}\label{eqt223}
\mathrm{Re}\Big[ \frac{1}{2}\sum_{j,k=0}^\infty a_{jk}\big(it-P(z_2)\big)^j z_2^k
+iz_2 P_{z_2}(z_2)\exp\Big(i\beta_1\big(it-P(z_2)\big)\Big) \Big]=0
\end{equation}
for all $(z_2,t)\in \Delta_{\epsilon_0}\times (-\delta_0,\delta_0)$.

Denote $f(z_2,t):=\mathrm{Re}\Big[\sum_{j,k=0}^\infty a_{jk}\big(it-P(z_2)\big)^j z_2^k\Big]$ 
for $(z_2,t)\in \Delta_{\epsilon_0}\times (-\delta_0,\delta_0)$. Then (\ref{eqt223}) 
tells us that
$$
f(z_2,t)=-2\mathrm{Re}\Big[iz_2 P_{z_2}(z_2)\exp\Big(i\beta_1\big(it-P(z_2)\big)\Big)
 \Big],~\forall~(z_2,t)\in \Delta_{\epsilon_0}\times (-\delta_0,\delta_0).  
$$
This implies that  $f(z_2,t)$ vanishes to infinite order at $z_2=0$ for every $t$ 
since $P_{z_2}(z_2)$ vanishes to infinite order at $z_2=0$ and $f_t(z_2,t)=-\beta_1 f(z_2,t)$. 
Consequently, one must have $a_{jk}=0$ for every $k\in \mathbb N^*$ and 
$j\in \mathbb N$, and thus
$$
f(z_2,t)=\mathrm{Re}\Big[ \sum_{j=0}^\infty a_{j0}\big(it-P(z_2)\big)^j\Big].
$$ 

Furthermore, the equation $f_t(z_2,0)=-\beta_1 f(z_2,0)$ yields
$$ 
\mathrm{Re}(ia_{10})+2\mathrm{Re}(ia_{20})(-P(z_2))+o(P(z_2))=-\beta_1
\Big(\mathrm{Re}(a_{10}) (-P(z_2))+o(P(z_2))\Big),
$$
which implies that $\mathrm{Re}(ia_{10})=0, 2\mathrm{Re}(ia_{20})=-\beta_1 \mathrm{Re}
(a_{10})=-\beta_1a_{10}$. Similarly, it follows from the equation $f_{tt}(z_2,0)=-\beta_1
 f_t(z_2,0)=\beta_1^2 f(z_2,0)$ that 
$$
2\mathrm{Re}(i^2a_{20})+3!\mathrm{Re}(i^2a_{30})(-P(z_2))+o(P(z_2))=\beta_1^2
\Big(\mathrm{Re}(a_{10}) (-P(z_2))+o(P(z_2))\Big),
$$
which again implies that 
$\mathrm{Re}(i^2a_{20})=0, 3!\mathrm{Re}(i^2a_{30})=\beta^2_1 \mathrm{Re}(a_{10})=\beta_1^2
a_{10}$. Continuing this process, we conclude that 
$a_{m0}=\frac{(i\beta_1)^{m-1}}{m!}a_{10}$ for every $m\in\mathbb N^*$, and hence 
$$
h_1(z_1,z_2)\equiv a_{10}\frac{e^{i\beta_1 z_1}-1}{i\beta_1}.
$$  
This implies $a_{10}\ne 0$ as $h_1$ does not vanish identically.  

Without loss of generality, we may assume that 
$a_{10}<0$. The case that $a_{10}>0$ will follow by a similar argument. 

Now the equation (\ref{eqt223}) with $t=0$ is equivalent to
\begin{equation}\label{eqtt223}
2\mathrm{Re}\Big[iz_2 P_{z_2}(z_2)\exp\Big(-i\beta_1 P(z_2)\Big) \Big]
=a_{10}\frac{\sin(\beta_1 P(z_2))}{\beta_1} 
\end{equation}
for all $z_2\in \Delta_{\epsilon_0}$.

Since $P$ is continuous at $z_2=0$, we may assume that $|P(z_2)|<\frac{\pi}{|\beta_1|}$ 
for every $|z_2|<\epsilon_0$. 
Moreover, because of the property (i) of $P$ 
there exists a real number $r\in (0,\epsilon_0)$ such that 
$0<|P(r)|<\frac{\pi}{|\beta_1|}$ and $re^{\pi/|a_{10}|}<\epsilon_0$. 

Fix $r$ and let $\gamma: (-\infty,+\infty)\to \Delta^*_{\epsilon_0}$ be a flow of 
the following equation
$$
\frac{d\gamma(t)}{dt}=i \gamma(t)\exp\Big(-i\beta_1 P(\gamma(t))\Big) ,~\gamma(0)=r.
$$
Denote $u(t):=P(\gamma(t))$ for $-\infty <t<+\infty$. Then (\ref{eqtt223}) is equivalent to
$$ 
u'(t)=a_{10} \frac{\sin(\beta_1 u)}{\beta_1}.
$$
A short computation shows that this differential equation has the solution
\begin{equation}\label{tn1}
\begin{split}
P(\gamma(t))=u(t)=\frac{2}{\beta_1}\arctan\Big\{\tan(\beta_1 
P(r)/2)e^{a_{10}t}\Big\}, ~-\infty <t<+\infty.
\end{split}
\end{equation}
Therefore, we have for $-\infty <t<+\infty$
\begin{equation*}
\begin{split}
\gamma(t)&=r\exp\Big[ \int_0^t ie^{-i\beta_1 P(\gamma(s))}ds\Big]\\
 &=r\exp\Big[ \int_0^t i\exp\Big(-2i\arctan\Big\{\tan(\beta_1
 P(r)/2)e^{a_{10}s}\Big\}\Big)ds\Big], 
\end{split}
\end{equation*}
and thus
\begin{equation*}
\begin{split}
|\gamma(t)|=r\exp\Big[ \int_0^t \sin\Big(2\arctan\Big\{\tan(\beta_1 
P(r)/2)e^{a_{10}s}\Big\}\Big)ds\Big].
\end{split}
\end{equation*}

By employing some trigonometric identities, we obtain the following
\begin{equation*}
\begin{split}
r^+&:= \lim_{t\to -\infty}|\gamma(t)|=r\exp\Big[ \int_0^{-\infty} 
\sin\Big(2\arctan\Big\{\tan(\beta_1 P(r)/2)e^{a_{10}s}\Big\}\Big)ds\Big]\\
&=r\exp\Big[ \int_0^{-\infty} \sin\Big( \pi -2\arctan\Big\{\frac{e^{-a_{10}s}}{\tan(\beta_1 
P(r)/2)}\Big\}\Big)ds\Big]\\
&=r\exp\Big[ \int_0^{-\infty} \sin\Big( 2\arctan\Big\{\frac{e^{-a_{10}s}}{\tan(\beta_1 
P(r)/2)}\Big\}\Big)ds\Big]\\
&=r\exp\Big[- \int_0^{+\infty} \sin\Big( 2\arctan\Big\{\frac{e^{a_{10}s}}{\tan(\beta_1 
P(r)/2)}\Big\}\Big)ds\Big]\\
&=r\exp\Big[-2 \int_0^{+\infty} \frac{\frac{e^{a_{10}s}}{\tan(\beta_1 P(r)/2)}}
{1+\Big(\frac{e^{a_{10}s}}{\tan(\beta_1 P(r)/2)}\Big)^2}ds\Big]\\
&=r\exp\Big[- \frac{2}{a_{10}}\int_0^{+\infty} \frac{d\Big(\frac{e^{a_{10}s}}{\tan(\beta_1 
P(r)/2)}\Big)}{1+\Big(\frac{e^{a_{10}s}}{\tan(\beta_1 P(r)/2)}\Big)^2}\Big]\\
&=r\exp\Big[\frac{2}{a_{10}}\arctan\Big(\frac{1}{\tan(\beta_1 P(r)/2)}\Big) \Big]\\
&\leq r\exp(\frac{\pi}{|a_{10}|})<\epsilon_0.
\end{split}
\end{equation*}
Therefore, there exists a sequence $\{t_n\}\subset \mathbb R$ such that $t_n\to -\infty$ and $\gamma(t_n)\to r^+e^{i\theta_0}$  as $n\to \infty$ for some $\theta_0\in [0,2\pi)$. Moreover, $|P(r^+e^{i\theta_0})|<|\frac{\pi}{\beta_1}|$. However, since $a_{10}<0$ and since $P$ is continuous on $\Delta_{\epsilon_0}$, it follows from (\ref{tn1}) that 
\begin{equation*}
|P(r^+e^{i\theta_0})|=|P(\lim_{n\to \infty}\gamma(t_n))|=|\lim_{n\to \infty} P(\gamma(t_n))|=|\frac{\pi}
{\beta_1}|, 
\end{equation*}
which is impossible.
 
Therefore, altogether we must have $h_1\equiv 0$.

\noindent
{\bf Case 2.} {\boldmath $h_1\equiv 0$.} 

We shall follow the proof of  \cite[Lemma $12$]{Ninh1}. 
In this case, (\ref{eq223t}) is equivalent to 
\begin{equation}\label{eq?5}
\mathrm{Re}\Big[P_{z_2}(z_2)\sum_{m=0}^\infty \big(it-P(z_2)\big)^m b_m(z_2)\Big]=0
\end{equation}
for all $(z_2,t)\in \Delta_{\epsilon_0}\times(-\delta_0,\delta_0)$, where $\epsilon_0>0$ 
and $\delta_0>0$ are small enough.

Since $h_2\not \equiv 0$, there is the smallest $m_0$ such that $b_{m_0}\not\equiv 0$ 
and thus it can be written as follows:
$$ 
b_{m_0}(z_2)=b_{m_0 n_0} z_2^{n_0}+o(z_2^{n_0}),
$$
where $n_0=\nu_0(b_{n_0})$ and $b_{m_0 n_0}\in \mathbb C^*$. 
Moreover, since $P(z_2)=o(|z_2|^{n_0})$ it follows from (\ref{eq?5}) with $t=\alpha P(z_2)$ 
($\alpha\in \mathbb R$ will be chosen later) that
$$
\mathrm{Re}\Big[\big(i\alpha-1 \big)^{m_0} \big(b_{m_0 n_0}z_2^{n_0}+o(|z_2|^{n_0})\big)
P_{z_2}(z_2)\Big]=0
$$
for every $z_2\in\Delta^*_{\epsilon_0}$. 
Notice that if $m_0>0$, then we can choose 
$\alpha$ so that 
\begin{equation*}
\mathrm{Re}\Big[ b_{m_0 n_0}\big(i\alpha-1 \big)^{m_0}\Big]\ne 0.
\end{equation*} 
Therefore, it follows from Corollary \ref{cor3} that $m_0=0, n_0=1$, 
and $\mathrm{Re}(b_{m_0 n_0})=\mathrm{Re}(b_{01})=0$. By a change of variable in $z_2$ (cf. \cite[Lemma $1$]
{Ninh1}), we can assume that $b_0(z_2)\equiv iz_2$. 

Next, we shall prove that $b_m\equiv 0$ for every $m\in \mathbb N^*$. Indeed, suppose otherwise. Then using the same argument as in the Subcace $1.1$, 
 $b_m(z_2)\equiv i^{m+1}\frac{(\beta_1)^m}{m!} z_2$ for every $m\in \mathbb N^*$. 
Therefore, $h_2(z_2)\equiv iz_2e^{i\beta_1 z_1}$.

Now the equation (\ref{eq?5}) with $t=0$ is equivalent to

\begin{equation}\label{neqtt223}
2\mathrm{Re}\Big[iz_2 P_{z_2}(z_2)\exp\Big(-i\beta_1 P(z_2)\Big) \Big]=0
\end{equation}
for all $z_2\in \Delta_{\epsilon_0}$.

Let $\gamma: (-\infty,+\infty)\to \Delta^*_{\epsilon_0}$ be a flow of the following equation
$$
 \frac{d\gamma(t)}{dt}=i \gamma(t)\exp\Big(-i\beta_1 P(\gamma(t))\Big) ,~\gamma(0)=r,
$$
where $0<r<\epsilon_0$ with $P(r)\ne 0$. 
Denote $u(t):=P(\gamma(t))$ for $-\infty <t<+\infty$. Then (\ref{neqtt223}) is equivalent to
$$ 
u'(t)=0,~-\infty <t<+\infty. 
$$
This tells us that $u(t)\equiv u(0)$, and therefore $P(\gamma(t))=P(r)$ for all $t\in \mathbb R$. 
Hence, we have
$$
\gamma(t)=r\exp\Big( ie^{-i\beta_1 P(r)}t\Big) 
$$
for all $t\in \mathbb R$, and thus
\begin{equation}\label{tq1}
|\gamma(t)|=r\exp\Big(\sin\big(\beta_1 P(r)\big) t\Big).
\end{equation}
Without loss of generality, we may assume that $\beta_1P(r)<0$. 
Then (\ref{tq1}) implies that $\gamma(t)\to 0$ as $t\to +\infty$, 
hence 
\begin{equation*}
P(r)=P(\gamma(t))=\lim_{t\to +\infty}P(\gamma(t))=P(0)=0.
\end{equation*} 
This is a contradiction. Therefore, $h_2(z_2)\equiv iz_2$.

Consequently, the equation (\ref{eq?5}) is now equivalent to
\begin{equation*}
\mathrm{Re}\Big[iz_2P'(z_2)\Big]=0
\end{equation*}
for all $z_2\in \Delta_{\epsilon_0}$, and thus it follows from 
\cite[Lemma 4]{Kim-Ninh} that $P$ is rotational. This ends the proof.
\end{proof}

\section{Examples}
\begin{example}
For $\alpha,C>0$, let $P$ be a function given by
\begin{equation*}
\begin{split} 
P(z_2)&=
\begin{cases}
\exp\Big(-\frac{C}{|\mathrm{Re}(z_2)|^\alpha}\Big) ~&\text{if}~ \mathrm{Re}(z_2)\ne 0, \\
0 &\text{if}~\mathrm{Re}(z_2)=0.
\end{cases}    
\end{split}
\end{equation*}
We note that the function $P$ satisfies the condition 
$\mathrm{(I)}$ (see \cite[Example $1$]{Ninh1}).
Moreover, since the function $\tilde P$, 
defined by $\tilde P(z_2)=\exp\Big(-\frac{C}{|z_2|^\alpha}\Big)$ 
if $z_2\ne 0$ and $\tilde{P}(0)=0$,
vanishes to infinite order only at the origin, it follows from 
Theorem~\ref{T11} that $\mathfrak{aut}_0(M_P,0)=0$ and
\begin{equation*}
\begin{split}
\mathfrak{aut}(M_P,0)=\mathfrak{g}_{-1}\oplus \mathfrak{g}_{0}=\{i\beta_1\partial_{z_1}+i\beta_2 \partial_{z_2}\colon \beta_1,\beta_2\in \mathbb R\}.
\end{split}
\end{equation*}
In addition, one obtains that 
$\mathrm{Aut}(M_P,0)=\{\text{id}\}$ and $\mathrm{Aut}
(M_P)=\{(z_1,z_2)\mapsto (z_1+it,z_2+is)\colon t,s\in\mathbb R\}$. 
\end{example}

\begin{example}\label{ex2}
Denote by $M_P$ the following hypersurface
$$
M_P:=\{(z_1,z_2)\in\mathbb C^2\colon \mathrm{Re}~z_1+P(z_2)=0\}.
$$
Let $P_1,P_2$ be functions given by
\begin{equation*}
\begin{split} 
P_1(z_2)&=
\begin{cases}
\exp\Big(-\frac{1}{|z_2|^\alpha}\Big) ~&\text{if}~ z_2\ne 0\\
0 &\text{if}~z_2=0,
\end{cases}   \\
P_2(z_2)&=
\begin{cases}
\exp\Big(-\frac{1}{|z_2|^\alpha}+\mathrm{Re}(z_2^m)\Big) ~&\text{if}~ z_2\ne 0\\
0 &\text{if}~z_2=0
\end{cases}  
\end{split}
\end{equation*}
where $\alpha>0$ and $m\in \mathbb N^*$. 

It is easy to check that $S_\infty(P_1)=S_\infty(P_2)=\{0\}$. 
Moreover, $P_1,P_2$ are positive on $\mathbb C^*$, $P_1$ is rotational, 
and $P_2$ is not rotational. 
Therefore, by Theorems \ref{T1}, \ref{T11} and \ref{T3}, \cite[Theorem B]{Ninh2}, and Corollaries \ref{co1} and \ref{co2}, we obtain the followings: 
\begin{equation*}
\begin{split}
\mathfrak{aut}_0(M_{P_1},0)&=\{i\beta z_2\partial_{z_2}\colon \beta\in \mathbb R\},\\
 \mathfrak{aut}(M_{P_1},0)&=\mathfrak{g}_{-1}\oplus \mathfrak{aut}_0(M_{P_1},0)\\
                          &=\{i\beta_1\partial_{z_1}+i\beta_2 z_2\partial_{z_2}\colon \beta_1,\beta_2\in \mathbb R\},\\ 
\mathfrak{aut}_0(M_{P_2},0)&=0, \\
\mathfrak{aut}(M_{P_2},0)&=\mathfrak{g}_{-1}=\{i\beta \partial_{z_1}\colon \beta\in \mathbb R\},
\end{split}
\end{equation*}
and
\begin{equation*}
\begin{split}
\mathrm{Aut}(M_{P_1},0)&=\{(z_1,z_2)\mapsto(z_1,e^{it}z_2)\colon t\in\mathbb R\},\\ 
\mathrm{Aut}(M_{P_1})&=\mathrm{Aut}(M_{P_1},0)\oplus \mathrm{T^1}(M_{P_1})\\
                     &=\{(z_1,z_2)\mapsto(z_1+is,e^{it}z_2)\colon s,t\in\mathbb R\},\\
\mathrm{Aut}(M_{P_2},0)&=\{(z_1,z_2)\mapsto(z_1,e^{2k\pi i/m}z_2)\colon k=0,\ldots,m-1\},\\
\mathrm{Aut}(M_{P_2})&=\mathrm{Aut}(M_{P_2},0)\oplus \mathrm{T^1}(M_{P_2})\\
                     &=\{(z_1,z_2)\mapsto(z_1+it,e^{2k\pi i/m}z_2)\colon t\in \mathbb R, k=0,\ldots,m-1\}.
\end{split}
\end{equation*}
\end{example}
\section*{Appendix A}
\subsection*{A.1. Leau-Fatou flower theorem}
The Leau-Fatou flower theorem states that it is possible to find invariant 
simple connected domains 
containing $0$ on the boundaries such that, on each domain, 
a conformal map which tangent to the identity is conjugated to 
a parabolic automorphism of the domain 
and each point in the domain is either attracted to or repelled from $0$. 
For more details we refer the reader to \cite{Abate, Br}.
These domains are called petals and their existence is predicted 
by the Leau-Fatou flower theorem. 
To give a simple statement of such a result, 
we note that if $g(z) = z + a_r z^r + O(z^{r+1})$ with $r > 1$ 
and $a_r\ne 0$, it is possible to perform a holomorphic change of variables 
in such a way that $g$ becomes conjugated to $g(z) = z + z^r + O(z^{r+1})$. 
The number $r$ is the order of $g$ at $0$. 
With these preliminary considerations at hand we have 
\begin{theorem}[Leau-Fatou flower theorem]\label{flower} 
Let $g(z) = z + z^r + O(z^{r+1})$ with $r > 1$. 
Then there exist $2(r - 1)$ domains called petals, $P_j^\pm$, symmetric 
with respect to the $(r - 1)$ directions $\arg z = 2\pi q/(r - 1), q = 0,\ldots,r - 2$ 
such that $P_j^+ \cap P_k^+ = \emptyset$ and $P_j^- \cap P_k^- = \emptyset$ for $j\ne k, 0\in \partial P_j^\pm$, 
each petal is biholomorphic to the right-half plane $H$, 
and $g^k(z) \to 0$ as $k \to \pm\infty$ for all $z \in P_j^\pm$, where $g^{k}=(g^{-1})^{-k}$ for $k<0$. 
Moreover for all $j$, the map $g\mid_{P_j^\pm}$ is holomorphically conjugated 
to the parabolic automorphism $z\to z + i$ on $H$. 
\end{theorem}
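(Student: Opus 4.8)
The plan is to follow the classical route via \emph{Fatou coordinates}, which in one stroke produces the petals, the dynamics on them, and the conjugacy to a translation. The first step is a change of variable that turns the parabolic germ into an almost-translation at infinity. Write $n:=r-1\ge 1$. The map $w=\psi(z):=\dfrac{1}{n\,z^{n}}$ is $n$-to-one and carries a suitable sector at the origin biholomorphically onto a neighbourhood of $\infty$, and a direct computation shows it conjugates $g$ to a map with a full asymptotic expansion
\[
\tilde g(w)=\psi\circ g\circ\psi^{-1}(w)=w+1+\frac{A}{w}+O\!\left(\frac{1}{w^{2}}\right),\qquad w\to\infty ,
\]
for some constant $A$. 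I would therefore pass to the corrected coordinate $\Psi(z):=\psi(z)-A\log\psi(z)$, still $n$-to-one near $0$, which conjugates $g$ to $\hat g(w)=w+1+O(|w|^{-2})$ at infinity.

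First I construct the petals. For $R$ large the half-plane $\Pi^{+}_{R}=\{\mathrm{Re}\,w>R\}$ (or, to make invariance exact, a slightly cone-shaped region asymptotic to it) satisfies $\hat g(\Pi^{+}_{R})\subset\Pi^{+}_{R}$ and $\hat g^{\,k}(w)\to\infty$ as $k\to+\infty$ uniformly on compacta, and symmetrically $\Pi^{-}_{R}=\{\mathrm{Re}\,w<-R\}$ is backward-invariant with $\hat g^{\,k}(w)\to\infty$ as $k\to-\infty$. Pulling back by the $n$-valued inverse of $\Psi$, the set $\Psi^{-1}(\Pi^{+}_{R})$ has exactly $n$ connected components accumulating at $0$, one about each of the $n$ attracting directions (those in which $z^{n}$ is a negative real), and $\Psi^{-1}(\Pi^{-}_{R})$ likewise has $n$ components, about the $n$ repelling directions; these $2n=2(r-1)$ components are the petals $P_{j}^{\pm}$, symmetric about the stated distinguished directions, pairwise disjoint within each sign, and each with $0\in\partial P_{j}^{\pm}$. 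Since $\big|\Psi(g^{\,k}(z))\big|=\big|\hat g^{\,k}(\Psi(z))\big|\to\infty$, every $z\in P_{j}^{+}$ has $g^{\,k}(z)\to0$ as $k\to+\infty$ and every $z\in P_{j}^{-}$ has $g^{\,k}(z)\to0$ as $k\to-\infty$.

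Finally I build the Fatou coordinate on a petal, say $P_{j}^{+}$. Putting $w_{k}:=\Psi(g^{\,k}(z))$, the estimate $w_{k+1}-w_{k}-1=\hat g(w_{k})-w_{k}-1=O(|w_{k}|^{-2})$ together with $|w_{k}|\sim k$ gives a summable series, so $\Phi(z):=\lim_{k\to\infty}\big(w_{k}-k\big)$ exists, is holomorphic on $P_{j}^{+}$, and satisfies $\Phi\circ g=\Phi+1$ there. After shrinking $R$ one checks that $\Phi$ is univalent and carries $P_{j}^{+}$ biholomorphically onto a right half-plane $\{\mathrm{Re}\,\zeta>C\}$: its image is invariant under $\zeta\mapsto\zeta+1$ because $\Phi$ semiconjugates $g$ to that translation, which forces a half-plane once $\Phi$ is injective. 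Composing with a suitable affine map $\zeta\mapsto i\zeta+\text{const}$ yields a biholomorphism $P_{j}^{+}\cong H$ conjugating $g|_{P_{j}^{+}}$ to $z\mapsto z+i$; the repelling petals are handled identically with $g^{-1}$ in place of $g$. Assembling these facts gives the theorem.

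The main obstacle, to my mind, lies in the first two steps: establishing the asymptotic expansion of $\tilde g$ at $\infty$ and correctly peeling off its resonant term $A/w$ — equivalently, justifying the $\log$-correction in $\Psi$, which is unavoidable when $A\ne0$ and is precisely what makes the series defining $\Phi$ converge — and then proving that $\Phi$ is not merely well defined but \emph{univalent}, with image exactly a half-plane. Granting that, the petal count, their symmetry, and the attracting/repelling behaviour follow almost formally from the $n$-to-one nature of $z\mapsto z^{n}$ and the transparent dynamics of $w\mapsto w+1$ on half-planes.
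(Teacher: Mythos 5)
The paper does not prove Theorem~\ref{flower} at all: it is quoted in Appendix~A.1 as a classical result, the reader is referred to Abate's lecture notes and Bracci's survey for details, and it is only invoked as a black box in Case~1 of the proof of Lemma~\ref{lemma1}. So there is no in-paper argument to compare yours against. What you have written is an outline of exactly the Fatou-coordinate proof contained in those cited sources, and as an outline it is essentially correct; you have also correctly identified the two genuinely delicate points, namely the logarithmic correction needed when the resonant coefficient $A$ is nonzero, and the univalence of the Fatou coordinate $\Phi$ together with the description of its image.

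Two concrete repairs would be needed in a full write-up. First, a small one: with $g(z)=z+z^{r}+O(z^{r+1})$ and $n=r-1$, the conjugating map must be $\psi(z)=-\tfrac{1}{n z^{n}}$; with your choice $\psi(z)=\tfrac{1}{n z^{n}}$ the conjugated map is $w-1+\cdots$, so the attracting and repelling half-planes (and hence the roles of the distinguished directions) get interchanged. Second, and more seriously, the deduction ``its image is invariant under $\zeta\mapsto\zeta+1$ \dots which forces a half-plane once $\Phi$ is injective'' is not valid: forward invariance under $\zeta\mapsto\zeta+1$ plus injectivity does not force the image to be a half-plane (a horizontal strip, or any union of rightward half-lines, is forward invariant). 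The standard fix is to prove a distortion estimate such as $|\Phi'/\Psi'-1|<\tfrac12$ on $\Pi_R^{+}$, deduce from it both the injectivity of $\Phi$ and the fact that $\Phi(\Pi_R^{+})$ \emph{contains} a right half-plane $\{\mathrm{Re}\,\zeta>C\}$, and then \emph{redefine} the petal as $\Phi^{-1}\paren{\{\mathrm{Re}\,\zeta>C\}}$; since the theorem only asserts the existence of petals with the listed properties, this is legitimate and it is how the cited proofs proceed. Finally, note that $\zeta\mapsto\zeta+1$ maps $\{\mathrm{Re}\,\zeta>C\}$ strictly into itself, so no biholomorphism of that half-plane onto $H$ can turn it into the surjective automorphism $z\mapsto z+i$ of $H$; the last clause of the statement must therefore be read as a conjugacy of restrictions (an identity $\varphi\circ g=\varphi+i$ wherever both sides are defined), which is precisely what your Fatou coordinate delivers, so you should not try to upgrade it to a conjugacy of self-maps by an affine change of variable.
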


\subsection*{A.2. Holomorphic tangent vector fields on the tubular model}
In the case that an infinite type model is tubular, we have the following theorem.
\begin{theorem}\label{T111}
Let $\tilde P$ be a $\mathcal{C}^\infty$-smooth function defined on a neighborhood of $0$ in $\mathbb C$ satisfying
\begin{itemize}
\item[(i)] $\tilde P(x)\not \equiv 0$ on a neighborhood of $x= 0$ in $\mathbb R$, and
\item[(ii)] $\tilde P$ vanishes to infinite order at $z_2=0$.
\end{itemize}
Denote by $P$ a $\mathcal{C}^\infty$-smooth function defined by setting 
$P(z_2):=\tilde P(\mathrm{Re}~z_2)$. Then $\mathfrak{aut}_0(M_P,0)=0$.
\end{theorem}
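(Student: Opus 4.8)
The plan is to take an arbitrary holomorphic vector field $H = h_1(z_1,z_2)\partial_{z_1} + h_2(z_1,z_2)\partial_{z_2}$ with $H(0)=0$ that is tangent to $M_P$, where $P(z_2) = \tilde P(\mathrm{Re}~z_2)$, and show $H \equiv 0$. Since $M_P$ is a rigid tube in the $z_2$-variable, the tangency condition on points $(it - P(z_2), z_2)$ gives an identity of the form
\begin{equation*}
\mathrm{Re}\Big[\tfrac12 h_1(it - \tilde P(x), x+iy) + \tilde P'(x)\,h_2(it - \tilde P(x), x+iy)\Big] = 0
\end{equation*}
for $(x,y,t)$ near $0$. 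The first step is to exploit the independence of $t$: expanding $h_1 = \sum_j a_j(z_2) z_1^j$, $h_2 = \sum_j b_j(z_2) z_1^j$ in powers of $z_1$ and substituting $z_1 = it - \tilde P(x)$, the real part must vanish for all small real $t$. Because $\tilde P$ vanishes to infinite order, the substitution $t = \alpha \tilde P(x)$ lets me isolate the lowest-order terms in a manner parallel to the proof of Theorem~\ref{T3}.

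Next I would run essentially the same Subcase analysis as in Case~2 (and parts of Case~1) of the proof of Theorem~\ref{T3}, invoking Lemma~\ref{Al3} and Corollary~\ref{cor3} — but now with the crucial difference that the base variable is really $x = \mathrm{Re}~z_2$, so $P_{z_2}(z_2) = \tfrac12\tilde P'(x)$ and $P$ itself only sees $\mathrm{Re}~z_2$. The key observation is that $\partial_{z_2} P = \partial_{\bar z_2} P$ up to the factor, so any relation forcing $h_2(z_2) \equiv i z_2 \cdot(\text{unit})$ in the rotational setting of Theorem~\ref{T3} here instead must be confronted with the fact that $P$ is constant along vertical lines. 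I expect to first show, again by the lowest-order extraction and Corollary~\ref{cor3}, that $h_1 \equiv 0$ is forced (the argument of Case~1 of Theorem~\ref{T3}, using a flow and the boundary behavior of $P$, adapts since $\tilde P$ has the same infinite-vanishing and connectedness-of-zero-set properties in the real variable). Then, with $h_1 \equiv 0$, the identity reduces to $\mathrm{Re}\big[\tilde P'(x)\sum_m (it - \tilde P(x))^m b_m(z_2)\big] = 0$, and the same reasoning gives $b_0(z_2) \equiv i\lambda z_2$ for some real $\lambda$ after normalization, then $b_m \equiv 0$ for $m \geq 1$, hence $h_2(z_1,z_2) = i\lambda z_2$.

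The final and, I expect, the main obstacle is to rule out $h_2(z_1,z_2) = i\lambda z_2$ with $\lambda \neq 0$ — i.e., to show the rotational field $i\lambda z_2\partial_{z_2}$ is \emph{not} tangent to a genuinely tubular $M_P$. Here the point is that tangency of $iz_2\partial_{z_2}$ would force $\mathrm{Re}[iz_2 P_{z_2}(z_2)] = 0$, i.e. $\mathrm{Re}[i z_2 \cdot \tfrac12\tilde P'(\mathrm{Re}~z_2)] = 0$, which reads $-\tfrac12 y\,\tilde P'(x) = 0$ for all $(x,y)$ near $0$; this gives $\tilde P' \equiv 0$ near $0$, contradicting $\tilde P \not\equiv 0$ together with $\tilde P(0)=0$. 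So the rotational possibility collapses immediately in the tubular case, which is exactly why $\mathfrak{aut}_0(M_P,0) = 0$ rather than being one-dimensional as in Example~\ref{ex2}. The bulk of the work is therefore bookkeeping to reach the normalized form $h_1 \equiv 0$, $h_2 = i\lambda z_2$, after which the tube structure kills the last parameter in one line; I would organize the write-up to quote the relevant steps of the proof of Theorem~\ref{T3} rather than repeat them, flagging only the places where $\mathrm{Re}~z_2$ versus $z_2$ matters.
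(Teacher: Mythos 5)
There is a genuine gap in the route you propose: the tools you plan to quote from the proof of Theorem~\ref{T3} do not apply to the tubular $P$. Lemma~\ref{Al3}, Corollary~\ref{cor3}, and Lemma~\ref{newlemma} (and hence the flow/ODE arguments of Case~1 of Theorem~\ref{T3}) all assume that the connected component of $z_2=0$ in the zero set of $P$, \emph{as a subset of} $\Delta_{\epsilon_0}\subset\CC$, is $\{0\}$. For $P(z_2)=\tilde P(\mathrm{Re}~z_2)$ this hypothesis fails badly: since $\tilde P(0)=0$, the entire segment of the imaginary axis lies in the zero set of $P$, so that connected component is a line segment, not a point. Concretely, the flow argument needs $\log|P(\gamma(t))|$ to stay finite along the trajectory, which is exactly what breaks when the trajectory crosses $\{\mathrm{Re}~z_2=0\}$; and your remark that ``$\tilde P$ has the same connectedness-of-zero-set properties in the real variable'' conflates the zero set of $\tilde P$ on $\RR$ with the zero set of $P$ on $\CC$, which is the one the lemmas actually use. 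So the entire reduction to the normal form $h_1\equiv 0$, $h_2=i\lambda z_2$ is unjustified as written. (A secondary issue: Corollary~\ref{cor3} would only give $b_0(z_2)=i\lambda z_2+O(z_2^2)$, and the normalizing change of variable used in Theorem~\ref{T3} to make $b_0\equiv iz_2$ destroys the tube structure you need for your final step; this is repairable, but the main obstruction above is not.)

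Your closing observation is correct and is in the right spirit --- $\mathrm{Re}\bigl[iz_2P_{z_2}(z_2)\bigr]=-\tfrac12\,y\,\tilde P'(x)$ cannot vanish identically unless $\tilde P'\equiv 0$ --- but the paper's actual proof exploits this tubular structure from the very beginning rather than only at the end, precisely because the general lemmas are unavailable. After substituting $t=\alpha P(z_2)$ and extracting lowest-order terms, the paper writes the resulting identity in Case~1 ($h_1\not\equiv 0$) as
\begin{equation*}
\frac{P'(x)}{\bigl(P(x)\bigr)^{j_0-m_0}}
=\frac{\mathrm{Re}\bigl[a_{j_0k_0}(i\alpha-1)^{j_0}\bigl(z_2^{k_0}+o(|z_2|^{k_0})\bigr)\bigr]}
{\mathrm{Re}\bigl[b_{m_0n_0}(i\alpha-1)^{m_0}\bigl(z_2^{n_0}+o(|z_2|^{n_0})\bigr)\bigr]},
\end{equation*}
whose left side depends only on $x=\mathrm{Re}~z_2$ while the right side depends on $y$ as well --- a separation-of-variables contradiction needing no flows. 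In Case~2 ($h_1\equiv 0$) one is left with $\tilde P'(x)\,\mathrm{Re}\bigl[(i\alpha-1)^{m_0}b_{m_0n_0}(z_2^{n_0}+o(|z_2|^{n_0}))\bigr]\equiv 0$, and the harmonic factor must vanish identically, forcing $b_{m_0n_0}=0$ (for a suitable $\alpha$ when $n_0=0$). You should replace the appeal to Lemma~\ref{Al3} and Corollary~\ref{cor3} by an argument of this elementary type, or else re-prove those lemmas under hypotheses compatible with a zero set containing the imaginary axis.
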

\begin{proof}
Suppose that $H=h_1(z_1,z_2)\partial_{z_1}+h_2(z_1,z_2)\partial_{z_2}$ 
is a holomorphic vector field defined on 
a neighborhood of the origin satisfying $H(0)=0$. We only consider $H$ 
that is tangent to $M_P$, which means that it satisfies the identity
\begin{equation}\label{eq221}
(\mathrm{Re}~ H) \rho(z)=0, \; z \in M_P.
\end{equation}

Expand $h_1$ and $h_2$ into the Taylor series at the origin 
$$
h_1(z_1,z_2)=\sum\limits_{j,k=0}^\infty a_{jk} z_1^j z_2^k, \; 
h_2(z_1,z_2)=\sum\limits_{j,k=0}^\infty b_{jk} z_1^jz_2^k,
$$
where $a_{jk}, b_{jk}\in \mathbb C$. We note that $a_{00}=b_{00}=0$ 
since $h_1(0,0)=h_2(0,0)=0$.

By a simple computation, we have 
\begin{equation*}
\begin{split}
\rho_{z_1}(z_1,z_2)= \frac{1}{2},~\rho_{z_2}(z_1,z_2)= P_{z_2}(z_2)=\frac{1}{2}P'(x),
\end{split}
\end{equation*}
where $x=\mathrm{Re}(z_2)$, and the equation (\ref{eq221}) 
can thus be re-written as
\begin{equation}\label{eq222}
\begin{split}
&\mathrm{Re} \Big[\frac{1}{2}h_1(z_1,z_2) +P_{z_2}(z_2)h_2(z_1,z_2)\Big ]=0
\end{split}
\end{equation}
for all $(z_1,z_2)\in M_P$.
Since the point $(it-P(z_2), z_2)$ is in $M_P$ with $t$ small enough, 
the above equation again admits a new form
\begin{equation}\label{eq223}
\mathrm{Re}\Big[ \frac{1}{2}\sum_{j,k=0}^\infty a_{jk}\big(it-P(z_2)\big)^j z_2^k
+P_{z_2}(z_2) \sum_{m,n=0}^\infty b_{mn} \big(it-P(z_2)\big)^m z_2^n\Big]=0
\end{equation}
for all $z_2\in \mathbb C$ and for all $t\in\mathbb R$ 
with $|z_2|<\epsilon_0$ and $|t|<\delta_0$, where $\epsilon_0>0$ and $\delta_0>0$ 
are small enough. 
The goal is to show that $H\equiv 0$. 
Striving for a contradiction, we suppose that $H\not\equiv 0$. 
Since $P_{z_2}(z_2)$ vanishes to infinite order at $0$, 
we notice that if $h_2\equiv 0$ then (\ref{eq222}) shows that $h_1\equiv 0$. 
So, we must have  $h_2\not\equiv 0$.

We now divide the argument into two cases as follows.

\noindent
{\bf Case 1.} {\boldmath $h_1\not \equiv 0$.} 
In this case let us denote by $j_0$ the smallest integer such that $a_{j_0 k}\ne 0$ 
for some integer $k$. 
Then let $k_0$ be the smallest integer such that $a_{j_0 k_0}\ne 0$. 
Similarly, let $m_0$ be the smallest integer such that $b_{m_0 n}\ne 0$ for some integer $n$. 
Then denote by $n_0$ the smallest integer such that $b_{m_0 n_0}\ne 0$. 
We see that $j_0\geq 1$ if $k_0=0$, and $m_0\geq 1$ if $n_0=0$. 
Since $P(z_2)=o(|z_2|^j)$ for any $j\in \mathbb N$, inserting $t=\alpha P(z_2)$ 
into (\ref{eq223}), where $\alpha\in \mathbb R$ will be chosen later, one has
\begin{equation}\label{eq224}
\begin{split}
\mathrm{Re} \Big[&\frac{1}{2} a_{j_0k_0}(i\alpha -1)^{j_0}(P(z_2))^{j_0}
\big(z_2^{k_0}+o(|z_2|^{k_0})\big) \\
&+ b_{m_0n_0}(i\alpha -1)^{m_0}\big(z_2^{n_0}+o(|z_2|^{n_0})\big) 
(P(z_2))^{m_0}P_{z_2}(z_2)  \Big ]=0
\end{split}
\end{equation}
for all $z_2\in \Delta_{\epsilon_0}$. 
We note that in the case $k_0=0$ and $\mathrm{Re}(a_{j_0 0})=0$, 
$\alpha$ is chosen in such a way that $\mathrm{Re}\big( (i\alpha-1)^{j_0}a_{j_0 0}\big)\ne 0$. 
Then (\ref{eq224}) yields that $j_0>m_0$ by virtue of the fact 
that $P_{z_2}(z_2)$ and $P(z_2)$ vanish to infinite order at $z_2=0$. 
Moreover, we remark that $P_{z_2}(z_2)=\frac{1}{2}P'(x)$, where $x:=\mathrm{Re}(z_2)$. 
Therefore, it follows from (\ref{eq224}) that
\begin{equation}\label{eq225}
\begin{split}
\frac{P'(x)}{\big(P(x)\big)^{j_0-m_0}}
= \frac{\mathrm{Re} \Big[ a_{j_0k_0}(i\alpha -1)^{j_0}\big(z_2^{k_0}+o(|z_2|^{k_0})\big)\Big]}
{\mathrm{Re}\Big[b_{m_0n_0}(i\alpha -1)^{m_0}\big(z_2^{n_0}+o(|z_2|^{n_0})\big)\Big]}
\end{split}
\end{equation}
for all $z_2=x+iy\in \Delta_{\epsilon_0}$ satisfying 
\begin{equation*}
P(x)\ne 0, \; \; 
\mathrm{Re}\Big[b_{m_0n_0}(i\alpha -1)^{m_0}(z_2^{n_0}+o(|z_2|^{n_0}))\Big]\ne 0.
\end{equation*} 
However, (\ref{eq225}) is a contradiction since its right-hand side depends also on $y$, and hence one must have $h_1\equiv 0$.

\noindent
{\bf Case 2.} {\boldmath $h_1\equiv 0$.} 
Let $m_0,n_0$ be as in the Case 1. 
Since $P(z_2)=o(|z_2|^{n_0})$, putting $t=\alpha P(z_2)$ in (\ref{eq223}), 
where $\alpha \in \mathbb R$ will be chosen later, one obtains that
\begin{equation*}
\begin{split}
\frac{1}{2}P'(x)\mathrm{Re} \Big[(i\alpha -1)^{m_0}b_{m_0n_0}
\big(z_2^{n_0}+o(|z_2|^{n_0})\big)  \Big ]=0
\end{split}
\end{equation*}
for all $z_2=x+iy\in \Delta_{\epsilon_0}$. Since $P'(x)\not \equiv 0$, one has
\begin{equation}\label{eq226}
\mathrm{Re} \Big[(i\alpha -1)^{m_0}b_{m_0n_0}\big(z_2^{n_0}+o(|z_2|^{n_0})\big)  \Big ]=0
\end{equation}
for all $z_2\in \Delta_{\epsilon_0}$. Note that if $n_0=0$, then $\alpha$ can be chosen in such a way that 
$\mathrm{Re}\big( (i\alpha -1)^{m_0}b_{m_00} \big)\ne 0$. Hence, (\ref{eq226}) is absurd.

Altogether, the proof of our theorem is complete.
\end{proof}

\end{document}